\documentclass[12pt]{article}



 \usepackage{graphics}
 \usepackage{graphicx}
 \usepackage{epstopdf}
 \usepackage{epsfig}
 \usepackage[all]{xy}
 \usepackage{cite}

 \usepackage{amssymb}
 \usepackage{amsthm,amsmath}

 \usepackage{latexsym, epsfig, psfrag,eepic,colordvi,bm}
 \usepackage{graphics,color}
 \usepackage{amsmath,amsfonts,amssymb,amscd}
 \usepackage[all]{xy}
 \usepackage{epstopdf}
 \usepackage{mathrsfs}

 \usepackage{amsthm,amsmath,amssymb}

 \usepackage{graphicx}
\usepackage{amsthm,amsmath,amssymb}

\usepackage{graphicx,epsfig}

\usepackage[colorlinks=true,citecolor=black,linkcolor=black,urlcolor=blue]{hyperref}
\usepackage{arydshln}




\setlength{\textwidth}{6.3in}
\setlength{\textheight}{8.7in}
\setlength{\topmargin}{0pt}
\setlength{\headsep}{0pt}
\setlength{\headheight}{0pt}
\setlength{\oddsidemargin}{0pt}
\setlength{\evensidemargin}{0pt}

\newcommand{\genstirlingI}[3]{%
	\genfrac{[}{]}{0pt}{#1}{#2}{#3}%
}

\newcommand{\stirlingI}[2]{\genstirlingI{}{#1}{#2}}

\theoremstyle{plain}
\newtheorem{theorem}{Theorem}[section]
\newtheorem{lemma}[theorem]{Lemma}
\newtheorem{corollary}[theorem]{Corollary}
\newtheorem{proposition}[theorem]{Proposition}

\theoremstyle{definition}

\newtheorem{conjecture}[theorem]{Conjecture}

\theoremstyle{remark}

\date{}

\title{\bf Genus distribution polynomials for bicellular bicolored maps all with
real zeros}

\author{Zi-Wei Bai, Ricky X. F. Chen~\footnote{Corresponding author (ORCID: 0000-0003-1061-3049)}\\
	\small School of Mathematics, Hefei University of Technology\\[-0.8ex]
	\small 485 Danxia Road, Hefei, Anhui 230601, P.~R.~China\\[-0.8ex]
	\small\tt 2021111458@mail.hfut.edu.cn, xiaofengchen@hfut.edu.cn
}

%
\begin{document}

\maketitle
	\tableofcontents
	\newpage
\noindent{\bf Abstract.}
Enumerating bicolored maps and maps according to the numbers (and possibly types)
of edges, faces, white vertices, black vertices and genus has been an important topic arising
in many fields of mathematics and physics. 
In particular, Jackson (1987), Zagier (1995) and Stanley (2011) respectively obtained some expressions for the generating polynomial
of the numbers of one-face bicolored maps with given number of edges and white vertex degree distribution while
tracking the number of black vertices. The cases for multiple faces are harder. In this paper,
	we first obtain the number for that of bicolored
maps with two faces, i.e., bicellular, of arbitrary length distribution, and then derive an explicit formula for the corresponding generating polynomial with respect to genus.
We next prove that the generating polynomial essentially has only real zeros and thus the genus distribution is log-concave.

\vskip 10pt

\noindent{\bf Keywords:}  Bicolored map, Graph embedding, Genus distribution, Log-concavity, Permutation product,
Group character

\noindent\small Mathematics Subject Classifications 2020: 05C31, 05A15, 20B30

\section{Introduction}

A map in this study is a $2$-cell embedding of a connected graph with loops and multi-edges allowed in an orientable surface~\cite{lan-zvon}.
A bicolored (bipartite) map is a map where the vertices are colored black and white such that every edge is
incident to two vertices of different colors.
If every white vertex is of degree two in a bicolored map, then we may
replace every white vertex and its incident edges with a single edge to obtain
a map (with only black vertices).
Thus, maps may be viewed as special bicolored maps.
Bicolored maps are also known as Grothendieck's dessins d'enfants in studying Riemann surfaces~\cite{Grothendieck,zap,lan-zvon,kaz-zog15}.
These objects lie at the crossroad of a number of research fields, e.g.,
topology, mathematical physics and representation theory,
and we will be mainly interested in their enumerative and combinatorial properties.

An edge-end of an edge is sometimes called a half-edge (or a dart).
A rooted map is a map where one half-edge is distinguished and called the root.
As for rooted bicolored maps, the root is assumed to be a half-edge around a black vertex by convention.
A labeled bicolored map of $n$ edges is a rooted bicolored map with edges respectively having
a label from the label set  $[n] = \{1, 2,\ldots , n\}$ where in particular
the edge containing the root is labeled $1$.

It is well-known (e.g.~\cite{lan-zvon}) that a map is completely determined by the cyclic ordering
of the half-edges around every vertex (i.e.,~rotation system) and a labeled bicolored map of $n$ edges can be encoded into a triple
of permutations in the set $\mathfrak{S}_n$ of permutations on $[n]$.
Specifically, a triple $(\alpha, \beta, \gamma)$ of permutations such that $\gamma=\alpha \beta$ (compose from right to left)
and the group generated by $\alpha$ and $\beta$ acts transitively on $[n]$
determines a labeled bicolored map in the following manner:
the cycles of $\gamma$ encode the faces, the cycles of $\alpha$ encode the white vertices,
and the cycles of $\beta$ encode the black vertices,
and an edge connects the same label in $\alpha$ and $\beta$, and the root is the half-edge of the edge with label $1$ around a
black vertex.
See Figure~\ref{fig:exam} for an example, where the black vertices give the permutation (in cycle notation) $\beta= (1836)(2754)$,
and the white vertices give the permutation $\alpha= (135)(26478)$.
A face is a region of the embedding surface bounded by a cyclic sequence of edges $(\ldots, i, i', \ldots)$,
where $i'$ is the edge obtained by starting with the edge $i$, going counterclockwisely to the
neighboring edge $e$ around a black vertex, and then going counterclockwisely to the neighboring edge
around a white vertex.
It is not hard to see that a face corresponds to a cycle in $\gamma=(12857)(346)$.
The requirement of transitive action forces the underlying graph to be connected.
If the requirement is not satisfied, the triple may be viewed as a disconnected bicolored map. 

In the following, bicolored maps are always rooted and labeled, but possibly disconnected, unless explicitly stated otherwise.
Additionally, two bicolored maps are viewed the same if the induced triples of permutations are the same.

\begin{figure}
	\centering
	
	\includegraphics[width=0.6\linewidth]{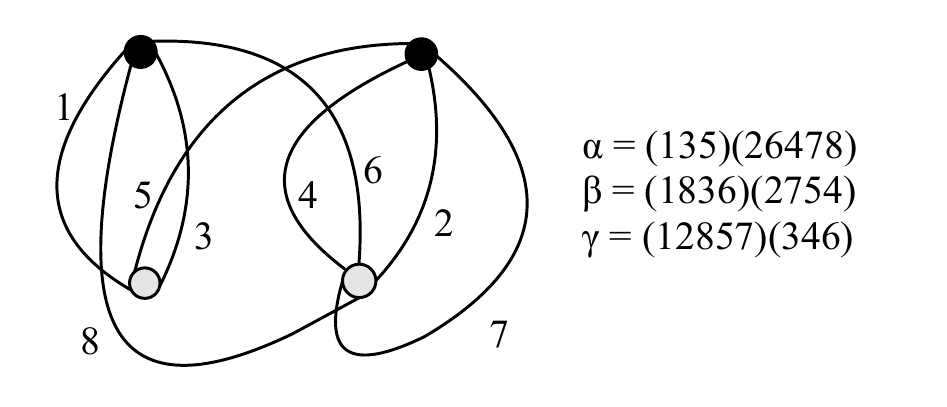}
	\caption{A labeled bicolored map of $8$ edges and genus $2$. }
	\label{fig:exam}
\end{figure}%

According to the Euler characteristic formula, the genus $g$ of a connected bicolored map (defined as the genus of the underlying embedding surface) satisfies:
\begin{align}
	v-e+f=2-2g,
\end{align}
where $v, e, f$ are respectively the numbers of vertices, edges and faces of the map.
This translates to the following relation:
\begin{align}\label{eq:genus}
	\kappa(\alpha)+\kappa(\beta)-n+\kappa(\gamma)=2-2g,
\end{align}
where $\kappa(\pi)$ denotes the number of cycles in the permutation $\pi$.

Since the work of Tutte in 1960s (e.g.,~\cite{tutte63}), the enumeration of bicolored maps (thus maps) has been being centered around studying the enumerative
and combinatorial properties of sets of bicolored maps filtered by different combinations
of these parameters, i.e., the number of edges, the genus, the number of black and/or white vertices (and vertex degree distribution),
and the number of faces (and face length distribution). Many of these studies are based on the permutation triple representation.

Up to our knowledge, if the set for enumeration contains bicolored maps with multiple faces, then usually
the bicolored maps in the set have the same number of edges and genus, while both the numbers of faces and vertices may vary.
See for instance~\cite{tutte63,CV81,BC86,BC91,car-cha15,gao93} counting
maps of $n$ edges and genus $g$.
If the set contains bicolored maps with the same number of faces (and possibly face length distribution), the same number of edges and genus other than zero, then it seems much harder to obtain the size of the set, even asymptotically.
A few results exist (e.g., explicit formulas for the genus zero case~\cite{BMS00}, certain asymptotics in~\cite{BCR93}, recurrences in~\cite{car-cha15,louf}, and functional relations in~\cite{kaz-zog15,chapuy-fang}) and almost no explicit formulas are known for the case of an arbitrary given genus
unless the number of faces is one, i.e., one-face maps or unicellular maps.
In fact, there are vast literature on studying the ``easier" one-face (equivalently, one white vertex or one black vertex) bicolored maps.
See~\cite{zag,Stanely-two,chen-reidys,ber-morales,cff,chr-versatile,Goupil,Jackson,MV,sch-vass,walsh1,harer-zagier,chapuy11} and the references therein.
In particular, formulas for the genus distribution polynomial for one-face
bicolored maps with any prescribed white vertex degree distribution have been obtain in Stanley~\cite{Stanely-two}, and somewhat more implicit in Jackson~\cite{Jackson} and Zagier~\cite{zag}, all relying on the group character approach.
Our main contributions here are explicit formulas for the number of certain bicellular (i.e., two-face) bicolored maps of arbitrary genus and the corresponding genus distribution polynomial, and their consequences.

To state our results more precisely, we introduce some notation first.
The multiset consisting of the
lengths of the disjoint cycles of a permutation $\pi \in \mathfrak{S}_n$ is called the cycle-type of $\pi$, and denoted by $\rho(\pi)$.
We write $\rho(\pi)$ as a partition of $n$, i.e., a nonincreasing positive integer sequence $\lambda=(\lambda_1, \lambda_2, \ldots, \lambda_k)$
such that $\lambda_1+\cdots + \lambda_k = n$.
If in $\lambda$, there are $m_i$ parts equal to $i$, we also write $\lambda$ as $[1^{m_1}, 2^{m_2}, \ldots, n^{m_n}]$.
We often omit $i^{m_i}$ when $m_i=0$ and simply write $i^1$ as $i$.
The number of parts in $\lambda$ is denoted by $\ell(\lambda)=k =\sum_i m_i$.
These two representations will be used interchangeably whichever is more convenient.
An $n$-cycle on $[n]$ is a permutation of cycle-type $(n)$, and
a permutation on the set $[2n]$ of cycle-type $[2^n]$ is called a fixed-point free involution.

Stanley~\cite{Stanely-two} studied the following cycle distribution polynomial:
\begin{align}
	P_{\mu}(x)= \frac{1}{|\mathcal{C}_{\mu}|} \sum_{\alpha \in \mathfrak{S}_n, \, \rho(\alpha)= \mu} x^{\kappa(\alpha \gamma)},
\end{align}
where $\gamma$ is a fixed $n$-cycle and $\mathcal{C}_{\mu}$ is the set of permutations on $[n]$ with cycle-type $\mu$.
This can be interpreted as the distribution of the number of black vertices (thus genus) over all
one-face bicolored maps with $n$ edges and white vertex degree distribution given by $\mu \vdash n$.
Note that the celebrated Harer-Zagier formula~\cite{harer-zagier} is for the case $\mu=[2^n] \vdash 2n$.
In~\cite{Stanely-two}, an expression for $P_{\mu}(x)$ was given in terms of the backward shift operator,
and it was shown that $P_{\mu}(x)$ has purely imaginary zeros.
In order for studying bicolored maps with multiple faces, one may study the above polynomial
with $\gamma$ being a permutation with multiple cycles.
As discussed above, no explicit formulas have been obtained for these polynomials.

Let $\gamma$ be a fixed permutation of cycle-type $[p,n-p]$,
and we study the polynomial 
$$
P_{[p,n-p], \mu}^{\bullet} (x)=\frac{ 1}{|\mathcal{C}_{\mu}|} \sum_{\alpha \in \mathfrak{S}_n, \, \rho(\alpha)= \mu, \atop \mbox{\tiny $<\alpha, \gamma>$ transitive}}   x^{\kappa(\alpha \gamma)}
=\sum_{g \geq 0} \frac{ B^{g}_{p,n}(\mu)}{|\mathcal{C}_{\mu}|\cdot |\mathcal{C}_{[p,n-p]}|}  x^{n-\ell(\mu)-2g},
$$
where $ B^{g}_{p,n}(\mu)$ is number of genus $g$ connected bicellular bicolored maps of face-type
$[p, n-p]$ (i.e., one face of length $p$ and one face of length $n-p$) and white vertex degree distribution (abbr.~WDD) $\mu$, equivalently, $ B^{g}_{p,n}(\mu)$ equals the number of genus $g$ connected bicolored maps of black vertex degree distribution
$[p, n-p]$ and white vertex degree distribution $\mu$.
Sometimes we say bicellular bicolored maps to refer to the latter, as the latter is somehow easier to visualize.
We remark that there exist explicit formulas for maps with two faces~\cite{jackson94,GS-2-face10}.
However, both are only for $\mu=[2^n]$ in our notation.
Our approach here does not easily lead to explicit formulas for $\mu=[2^n]$
but many other $\mu$'s instead. 

Our first results are a formula for $ B^{g}_{p,n}(\mu)$ for any $\mu$ satisfying certain condition and its consequences,
in particular, a polynomial structure of $ B^{g}_{p,n}(\mu)$ and some asymptotics.
We then derive a nice expression for the polynomial itself.
Based on these, we next prove that every root of the polynomial $P_{[p,n-p], \mu}^{\bullet} (x)$ is purely imaginary.
This immediately implies the log-concavity of the genus distribution sequence $B^{g}_{p,n}(\mu), g=0, 1,\ldots$ which is closely related to the well-known, three-decade old
log-concavity conjecture of Gross, Robbins and Tucker~\cite{GRT} for topological embeddings of graphs.
In~\cite{GRT}, the authors proved that the conjecture is true for one-vertex graphs (i.e., bouquets of circles)
based on the enumerative formula of one-face maps in~\cite{Jackson}.
This also partially addresses a problem of Stanley (private communication) that if his~\cite{Stanely-two} counterexample
$P_{(3,3,2), (3,3,2)} (x)$ is the ``smallest'' one that has roots with non-zero real parts.
Some of these results generalize those for $p=2$ in~\cite{bai-chen1} with much more delicate (sometimes quite different) arguments.

The paper is organized as follows.
In Section~\ref{sec2}, we review the main tool that will be
used.
In Section~\ref{sec3}, we compute $B^g_{p,n}(\mu)$ and $P^{\bullet}_{[p,n-p], \mu}(x)$ for any $\mu$ having the smallest part larger than $p$ (i.e., the minimum degree of the white vertices is larger than $p$). The obtained formulas are presented in Theorem~\ref{thm:main-1}.
As a consequence, we obtain the exact average genus of the corresponding bicolored maps, an analogue of the Harer-Zagier formula~\cite{harer-zagier} and
particularly a polynomial structure on the numbers of bicolored maps.
In Section~\ref{sec4}, via some novel arguments, we prove that every root of $P^{\bullet}_{[p,n-p], \mu}(x)$
has a real part zero (Theorem~\ref{thm:zero-root}) which implies the log-concavity of the genus distribution sequence of  $B^g_{p,n}(\mu), \, g=0,1,2,\ldots$

\section{Group characters} \label{sec2}

Let $\mathcal{C}_{\lambda}$ denote the conjugacy class of $\mathfrak{S}_n$ containing
permutations of the same cycle-type $\lambda$.
For $\lambda=[1^{m_1}, 2^{m_2},\ldots, n^{m_n}] \vdash n$,
the number of elements contained in $\mathcal{C}_{\lambda}$ is well known to be
$$
|\mathcal{C}_{\lambda}|=\frac{n!}{z_{\lambda}}, \quad \mbox{where $z_{\lambda}= \prod_{i=1}^n i^{m_i} m_i!$.}
$$
The permutations on $[n]$ having exactly $k$ cycles are counted by the signless Stirling
number of the first kind $\stirlingI{n}{k}$ which satisfies
	\begin{align}
	(x)_n:=x(x-1)\cdots (x-n+1) = \sum_{k=1}^n (-1)^{n-k}\stirlingI{n}{k} x^k .
\end{align}

Let $\chi^{\lambda}$ denote the character associated to the irreducible representation of $\mathfrak{S}_n$ indexed by $\lambda \vdash n$,
and $f^{\lambda}$ denote its dimension.
The readers are invited to consult Stanley~\cite{stan-ec2}, and Sagan~\cite{sagan} for the character theory of symmetric groups.

\begin{lemma}[The hook length formula~\cite{stan-ec2,sagan}]
	Suppose $\lambda \vdash n$. Then,
	\begin{align}
		f^{\lambda} = \frac{n!}{ \prod_{u \in \lambda} h(u)},
	\end{align}
where for $u=(i,j)\in \lambda$, i.e., the $(i,j)$ cell in the Young diagram of $\lambda$, $h(u)$ is the hook length of
the cell $u$.
\end{lemma}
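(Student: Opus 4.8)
The plan is to derive the formula from the classical Frobenius expression for the dimension $f^{\lambda}$ together with one short combinatorial identity about the hook lengths inside a single row. Fix $\lambda=(\lambda_1\ge\cdots\ge\lambda_k>0)\vdash n$ and put $\ell_i:=\lambda_i+k-i$ for $1\le i\le k$, so that $\ell_1>\ell_2>\cdots>\ell_k>0$; these are exactly the hook lengths $h_{(1,1)}>h_{(2,1)}>\cdots>h_{(k,1)}$ of the first column of $\lambda$. Evaluating the Frobenius character formula for $\mathfrak{S}_n$ at the identity element (equivalently, expanding the determinant $f^{\lambda}=n!\det\!\bigl(1/(\lambda_i-i+j)!\bigr)_{1\le i,j\le k}$ and factoring out a Vandermonde determinant) gives
$$
f^{\lambda}=\frac{n!}{\ell_1!\,\ell_2!\cdots\ell_k!}\prod_{1\le i<j\le k}(\ell_i-\ell_j).
$$
Since the paper already uses the character theory of $\mathfrak{S}_n$ freely and cites \cite{stan-ec2,sagan}, this identity may simply be quoted.

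Next I would establish the row--hook identity: for every $i$, the set of hook lengths of the cells of row $i$ is
$$
\{\,h_{(i,j)}:1\le j\le\lambda_i\,\}=\{1,2,\ldots,\ell_i\}\setminus\{\,\ell_i-\ell_j:i<j\le k\,\}.
$$
The hook lengths strictly decrease along row $i$, from $h_{(i,1)}=\ell_i$ to $h_{(i,\lambda_i)}\ge 1$, so they are $\lambda_i$ distinct elements of $\{1,\dots,\ell_i\}$ and leave $k-i$ integers of $\{1,\dots,\ell_i\}$ unattained; reading the boundary lattice path of $\lambda$ (or arguing by induction on $k-i$) shows that these unattained integers are precisely the numbers $\ell_i-\ell_j$ with $i<j\le k$. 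Hence $\prod_{j}h_{(i,j)}=\ell_i!\big/\prod_{i<j\le k}(\ell_i-\ell_j)$, and multiplying over the $k$ rows,
$$
\prod_{u\in\lambda}h_u=\prod_{i=1}^{k}\frac{\ell_i!}{\prod_{i<j\le k}(\ell_i-\ell_j)}=\frac{\ell_1!\cdots\ell_k!}{\prod_{1\le i<j\le k}(\ell_i-\ell_j)}.
$$
Comparing this with the Frobenius formula yields $f^{\lambda}=n!\big/\prod_{u\in\lambda}h_u$.

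The step needing the most care is the identification of the unattained integers in the row--hook identity, that is, verifying that they are exactly the $\ell_i-\ell_j$ for $j>i$, with none omitted and none counted twice; the cleanest way to make this airtight is through the $\{0,1\}$ boundary word of $\lambda$, in which each hook length appears as the distance between a vertical step and a subsequent horizontal step, so that the row-$i$ statement reduces directly to the definition of the $\ell_i$. If instead one wants a proof that avoids the Frobenius formula, a self-contained alternative is the probabilistic hook-walk argument: verify the branching recursion $f^{\lambda}=\sum_{\mu}f^{\mu}$ summed over all $\mu$ obtained by deleting a corner cell of $\lambda$, and then show that $F^{\lambda}:=n!/\prod_{u}h_u$ obeys the same recursion by interpreting $\sum_{\mu}F^{\mu}/F^{\lambda}=1$ as the total probability that a hook walk started from a uniformly random cell ends at some corner; either route yields the stated formula.
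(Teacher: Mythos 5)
Your argument is correct, but note that the paper does not prove this statement at all: it is stated as a lemma only to fix notation and is quoted directly from the cited references (Stanley, Sagan), consistent with its role as a classical background fact. Your proposal supplies an actual proof, and it is the standard one: the Frobenius/determinantal formula $f^{\lambda}=\frac{n!}{\ell_1!\cdots\ell_k!}\prod_{i<j}(\ell_i-\ell_j)$ with $\ell_i=\lambda_i+k-i$, combined with the row--hook identity stating that the hook lengths in row $i$ are $\{1,\dots,\ell_i\}\setminus\{\ell_i-\ell_j: j>i\}$, which upon taking products over rows converts the factorial--Vandermonde expression into $n!/\prod_{u\in\lambda}h_u$. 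You correctly identify the only delicate point, namely that the $k-i$ integers missing from the row-$i$ hook multiset are exactly the differences $\ell_i-\ell_j$ with no repetitions or omissions; the boundary-word (or induction on $k-i$) argument you indicate is the standard way to close this, since the $\ell_i-\ell_j$ are distinct (the $\ell_j$ being distinct) and each is visibly not a hook length of row $i$, so a cardinality count finishes it. The alternative route you sketch via the branching recursion $f^{\lambda}=\sum_{\mu}f^{\mu}$ and the Greene--Nijenhuis--Wilf hook walk is also a valid self-contained proof, and it has the advantage of avoiding character theory entirely, though for the purposes of this paper---which uses the formula only as an ingredient in evaluating $\mathfrak{c}_{\lambda,m}/f^{\lambda}$ in Lemmas 3.3 and 3.4---simply citing the references, as the authors do, is the appropriate level of detail.
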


 Suppose $C$ is a conjugacy class of $\mathfrak{S}_n$ indexed by $\alpha \vdash n$.
We view $\chi^{\lambda}(\alpha)$ and $\chi^{\lambda}(C)$ as the same.
Let
\begin{align*}
	\mathfrak{m}_{\lambda,m}= \prod_{u\in \lambda} \frac{m+c(u)}{h(u)}, \qquad 	\mathfrak{c}_{\lambda,m} &= \sum_{i=0}^m (-1)^i {m \choose i} \mathfrak{m}_{\lambda,m-i}.
\end{align*}
where for $u=(i,j)\in \lambda$, $c(u)=j-i$.
The following theorem was proved in~\cite{chr-hur}.

\begin{theorem}[Chen~\cite{chr-hur}] \label{thm:main-recur11}
	Let $\xi_{n,m}(C_1,\ldots, C_t)$ be the number of tuples $( \sigma_1,\sigma_2,\ldots ,\sigma_t)$
	such that the permutation $\pi=  \sigma_1\sigma_2\cdots \sigma_t$ has $m$ cycles, where $\sigma_i $
	belongs to the conjugacy class ${C}_i$ of $\mathfrak{S}_n$. Then we have
	\begin{align}\label{eq:main-recur11}
		\xi_{n,m}(C_1,\ldots, C_t)
		= \bigg(\prod_{i=1}^{t}|C_i| \bigg)\sum_{k = 0}^{n-m}  \stirlingI{m+k}{m}  \frac{ (-1)^k }{(m+k)!}  W_{n,m+k}(C_1,\ldots, C_t),
	\end{align}
	where for any $r>0$
	\begin{align}\label{eq:W}
		W_{n,r}(C_1,\ldots, C_t)=  \sum_{\lambda \vdash n} \frac{\mathfrak{c}_{\lambda,r} } {(f^{\lambda})^{t-1}}	  \prod_{i=1}^t \chi^{\lambda}(C_i).
	\end{align}
	
\end{theorem}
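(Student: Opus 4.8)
The plan is to reduce the identity to two classical facts about the symmetric group: the Frobenius formula for the number of factorizations with prescribed conjugacy classes of the factors, and the content evaluation of the ``cycle-counting class sum''. Recall Frobenius's formula: for a fixed permutation $\tau$ in a conjugacy class $C$ of $\mathfrak{S}_n$, the number of tuples $(\sigma_1,\ldots,\sigma_t)$ with $\sigma_i\in C_i$ and $\sigma_1\sigma_2\cdots\sigma_t=\tau$ equals $\frac{1}{n!}\big(\prod_{i=1}^t|C_i|\big)\sum_{\lambda\vdash n}(f^\lambda)^{1-t}\chi^\lambda(C)\prod_{i=1}^t\chi^\lambda(C_i)$; here one uses that characters of $\mathfrak{S}_n$ are real and $\tau$ is conjugate to $\tau^{-1}$, so no complex conjugate is needed. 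Write $\Phi_\lambda(x):=\sum_{\sigma\in\mathfrak{S}_n}x^{\kappa(\sigma)}\chi^\lambda(\sigma)$, so that $[x^m]\,\Phi_\lambda(x)=\sum_{\kappa(C)=m}|C|\,\chi^\lambda(C)$. Multiplying the Frobenius count by $|C|$ and summing over all conjugacy classes $C$ of $\mathfrak{S}_n$ with $\kappa(C)=m$ gives
$$
\xi_{n,m}(C_1,\ldots,C_t)=\frac{1}{n!}\Big(\prod_{i=1}^t|C_i|\Big)\sum_{\lambda\vdash n}\frac{\prod_{i=1}^t\chi^\lambda(C_i)}{(f^\lambda)^{t-1}}\;[x^m]\,\Phi_\lambda(x),
$$
so the problem reduces to expanding the single polynomial $\Phi_\lambda(x)$.

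The second step is to identify $\Phi_\lambda(x)$ via the Jucys--Murphy elements $J_i=\sum_{1\le j<i}(j\,i)\in\mathbb{C}[\mathfrak{S}_n]$. By Jucys's theorem, $\sum_{\sigma\in\mathfrak{S}_n}x^{\kappa(\sigma)}\sigma=\prod_{i=1}^n(x+J_i)$ lies in the center of $\mathbb{C}[\mathfrak{S}_n]$ and acts in the irreducible module $V^\lambda$ as the scalar $\prod_{u\in\lambda}(x+c(u))$; taking the trace in $V^\lambda$ gives $\Phi_\lambda(x)=f^\lambda\prod_{u\in\lambda}(x+c(u))$. Combining with the hook length formula $f^\lambda=n!/\prod_{u\in\lambda}h(u)$ turns this into $\Phi_\lambda(x)=n!\prod_{u\in\lambda}\frac{x+c(u)}{h(u)}=n!\,\mathfrak{m}_{\lambda,x}$, namely $n!$ times the polynomial obtained from $\mathfrak{m}_{\lambda,m}$ by promoting the integer $m$ to an indeterminate $x$.

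The third step changes bases twice on the degree-$n$ polynomial $\mathfrak{m}_{\lambda,x}$. Newton's forward-difference interpolation gives $\mathfrak{m}_{\lambda,x}=\sum_{\ell=0}^{n}\mathfrak{c}_{\lambda,\ell}\binom{x}{\ell}$, because $\mathfrak{c}_{\lambda,\ell}=\sum_{d=0}^\ell(-1)^d\binom{\ell}{d}\mathfrak{m}_{\lambda,\ell-d}$ is precisely the $\ell$-th forward finite difference of $m\mapsto\mathfrak{m}_{\lambda,m}$ at $0$. Expanding $\binom{x}{\ell}=\frac{1}{\ell!}(x)_\ell=\frac{1}{\ell!}\sum_{m}(-1)^{\ell-m}\stirlingI{\ell}{m}x^m$ by the defining relation of the signless Stirling numbers of the first kind and reading off the coefficient of $x^m$ (with $\ell=m+k$) yields $[x^m]\,\Phi_\lambda(x)=n!\sum_{k=0}^{n-m}\stirlingI{m+k}{m}\frac{(-1)^k}{(m+k)!}\mathfrak{c}_{\lambda,m+k}$. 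Substituting into the displayed formula for $\xi_{n,m}$, the $n!$ cancels, the finite sums over $\lambda$ and over $k$ are interchanged, and the inner $\lambda$-sum is exactly $W_{n,m+k}(C_1,\ldots,C_t)$ of \eqref{eq:W}, giving \eqref{eq:main-recur11}.

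I expect the main obstacle to be bookkeeping rather than conceptual content: one must carry signs and factorials faithfully through the binomial-to-monomial conversion and verify the ranges precisely --- the expansion $\mathfrak{m}_{\lambda,x}=\sum_\ell\mathfrak{c}_{\lambda,\ell}\binom{x}{\ell}$ stops at $\ell=n$ since $\deg\mathfrak{m}_{\lambda,x}=n$ forces $\mathfrak{c}_{\lambda,\ell}=0$ for $\ell>n$, while $\binom{x}{\ell}$ with $\ell<m$ contributes nothing to $[x^m]$, so the outer sum genuinely runs over $0\le k\le n-m$. The two representation-theoretic inputs --- the Frobenius factorization count and the Jucys--Murphy content evaluation of $\sum_\sigma x^{\kappa(\sigma)}\sigma$ --- are entirely standard and can be quoted from Stanley~\cite{stan-ec2} or Sagan~\cite{sagan}; note that no transitivity or connectedness hypothesis enters, matching the fact that $\xi_{n,m}$ counts all (possibly disconnected) tuples.
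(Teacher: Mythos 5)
Note first that the paper itself gives no proof of Theorem~\ref{thm:main-recur11}: it is imported verbatim from~\cite{chr-hur}, so there is no internal argument to compare yours against. On its own merits your proof is correct and complete: the Frobenius factorization count (with the complex conjugate legitimately dropped since characters of $\mathfrak{S}_n$ are integer-valued), the Jucys--Murphy evaluation $\sum_{\sigma}x^{\kappa(\sigma)}\chi^{\lambda}(\sigma)=f^{\lambda}\prod_{u\in\lambda}(x+c(u))=n!\,\mathfrak{m}_{\lambda,x}$, and the two changes of basis --- Newton's forward-difference expansion $\mathfrak{m}_{\lambda,x}=\sum_{\ell\le n}\mathfrak{c}_{\lambda,\ell}\binom{x}{\ell}$ (valid because $\mathfrak{c}_{\lambda,\ell}$ is precisely the $\ell$-th difference at $0$ and $\deg\mathfrak{m}_{\lambda,x}=n$), followed by $(x)_{\ell}=\sum_m(-1)^{\ell-m}\stirlingI{\ell}{m}x^m$ --- assemble exactly into \eqref{eq:main-recur11} with the correct range $0\le k\le n-m$ and with the $\lambda$-sum matching \eqref{eq:W}. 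This is the natural character-theoretic route, and the very shape of the statement (contents, hook lengths, the factor $\stirlingI{m+k}{m}(-1)^k/(m+k)!$) indicates that~\cite{chr-hur} proceeds in essentially this way, though whether it organizes the binomial-to-monomial step via Newton interpolation, as you do, or via evaluations at integers and a binomial transform cannot be verified from the present text; either way, your argument stands on its own, with all inputs quotable from~\cite{stan-ec2,sagan} plus Jucys's theorem.
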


There exists a formula for the multivariate generating function of the numbers $\xi_{n,m}$ in
Louf~\cite[Theorem 2.1.4]{louf-phd}. But it is not obvious to us how Theorem~\ref{thm:main-recur11} may be easily extracted from
the generating function.
The case $m=n$ of eq.~\eqref{eq:main-recur11}
reduces to the famous Frobenius identity (e.g.~\cite{lan-zvon}). See Chen~\cite{chr-hur} for discussion.
The second author~\cite{chen} has applied Theorem~\ref{thm:main-recur11} to studying one-face bicolored maps,
and a plethora of results, e.g., the Harer-Zagier formula~\cite{harer-zagier} and the Jackson formula~\cite{Jackson-combinatorial,sch-vass}, have been obtained in a unified way.

\section{Enumeration of bicellular bicolored maps} \label{sec3}

Recall WDD stands for white vertex degree distribution.
In this paper, as for $P_{[p,n-p], \mu}^{\bullet}(x)$, we only study the following two cases for WDD $\mu$ of these bicolored maps:
(i) any $\mu$ with its minimum size $\min(\mu)$ of a part no less than $p+1$, and (ii)
$\mu=[p, n-p]$.
The computation for the former is presented in this section in detail,
while the latter will be separately considered in Section~\ref{sec5}.
Note that when $p<< n-p$, the considered two cases here already contribute
a significant portion of all possibilities.

Let $P_{[p,n-p], \mu}(x)$ denote the counterpart of $P_{[p,n-p], \mu}^{\bullet}(x)$ that additionally counts the corresponding  disconnected bicellular bicolored maps.
For both mentioned cases, we will first compute $P_{[p,n-p], \mu}(x)$ and then obtain $P_{[p,n-p], \mu}^{\bullet}(x)$ by
subtracting the contribution from the disconnected maps if there are.
By construction, the (possibly) disconnected genus distribution polynomial
\begin{align*}
	P_{[p,n-p], \mu}(x) &= \frac{1}{|\mathcal{C}_{\mu}| \cdot |\mathcal{C}_{[p, n-p]}|} \sum_{m>0} \xi_{n,m }(\mathcal{C}_{\mu}, \mathcal{C}_{[p, n-p]}) x^m\\
	&= \sum_{m>0} \sum_{k=0}^{n-m}  \stirlingI{m+k}{m}  \frac{ (-1)^k }{(m+k)!}  W_{n,m+k}(\mathcal{C}_{\mu},\mathcal{C}_{[p, n-p]}) x^m,
\end{align*}
where
the corresponding $W$-number is as follows:
	\begin{align}\label{eq:WW}
	W_{n,r}=  \sum_{\lambda \vdash n} \frac{\mathfrak{c}_{\lambda,r} }{	 f^{\lambda} }  \chi^{\lambda}([p, n-p]) \, \chi^{\lambda}(\mu) .
\end{align}

 Throughout the rest of the present section, we assume
\begin{align*}
	\mu=(\mu_1, \mu_2, \ldots, \mu_d)= [l^{n_l}, \ldots, q^{n_q}] \vdash n,
\end{align*}
where $l \geq p+1$ and $q \geq p+1$ are respectively the minimum size and maximum size of a part in $\mu$.
We also assume $0<p<n-p$ and $n-2p\geq 2$ if not explicitly stated otherwise.

Next, we devote our effort to the computation of the $W$-numbers, where
the evaluation of the irreducible characters involved are based on the Murnaghan-Nakayama rule (see, e.g., Stanley~\cite{stan-ec2}, and Sagan~\cite{sagan}).

\begin{lemma} \label{lem:chi-pnp}
	For $p>0$ and $\lambda \vdash n \geq 2p+1$, we have
	\begin{itemize}
		\item The character
		\begin{align*}
			\chi^{\lambda}([p,n-p])= \begin{cases}
				(-1)^j ,  & \mbox{if }\lambda =[1^j,n-j], \, 0 \leq j \leq p-1, \\
				(-1)^{j+1} , & \mbox{if } \lambda =[1^j,n-j], \mbox{  $n-p \leq j \leq n-1$};
			\end{cases}
		\end{align*}
		\item If $\lambda =[1^j, 2^k, p-k+1, n-j-k-p-1]$ where $0\leq k \leq p-1$ and $0 \leq j \leq n-2p-2$, then $\chi^{\lambda}([p,n-p])= (-1)^{j+1}$;
		\item If $\lambda =[1^j, 2^k, p-k-j, n-k-p]$ where $0\leq k \leq p-2$ and $0 \leq j \leq p-k-2$, then $\chi^{\lambda}([p,n-p])= (-1)^{j}$;
		\item If $\lambda= [1^j, 2^k, n-p-k-j, p-k]$ where $n-2p \leq j \leq n-p-2$ and $0 \leq k \leq n-p-j-2$, then $\chi^{\lambda}([p,n-p])= (-1)^{j}$; 
		\item For any other $\lambda \vdash n$, $\chi^{\lambda}([p,n-p])=0$.
	\end{itemize}
\end{lemma}
\begin{proof}
	According to the Murnaghan-Nakayama rule, the $\lambda$'s for which $\chi^{\lambda}([p,n-p])$ may return a nonzero value
	correspond to the Young diagrams in Figure~\ref{fig:char-p1}.
	\begin{figure}
		\centering
		
		\includegraphics[width=0.8\linewidth]{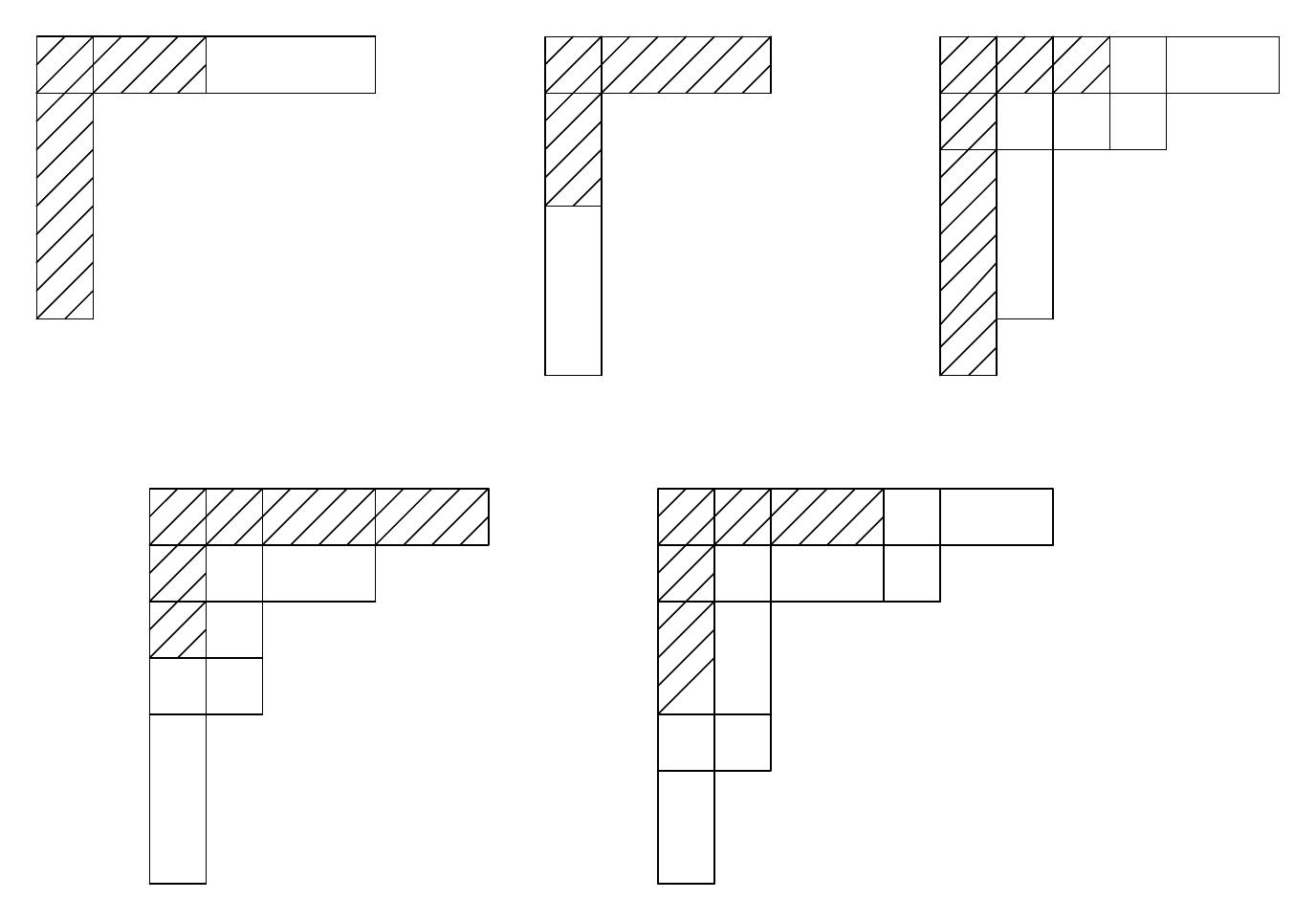}
		\caption{Young diagrams of shape $\lambda$ that $\chi^{\lambda}([p,n-p])$ may not vanish, where the size of the rim hook with slash is $p$. }
		\label{fig:char-p1}
	\end{figure}%
	The rest is straightforward and the lemma follows.
\end{proof}

The calculation of $\chi^{\lambda}(\mu)$ is crucial and generally hard.
However, we observe that only the values for $\lambda$ that give a non-zero value in Lemma~\ref{lem:chi-pnp} are needed.

\begin{lemma}\label{lem:beta}
There holds	
	\begin{align}\label{eq:beta-simple}
		\chi^{\lambda}(\mu)= \begin{cases}
			(-1)^j ,  & \mbox{if } \lambda =[1^j,n-j], \, 0 \leq j \leq p-1, \\
			(-1)^{j+1-\ell(\mu)} , & \mbox{if }\lambda =[1^j,n-j], \mbox{  $n-p \leq j \leq n-1$}.
		\end{cases}
	\end{align}
For $\lambda =[1^j, 2^k, p-k+1, n-j-k-p-1]$ where $0\leq k \leq p-1$ and $0 \leq j \leq n-2p-2$,
	\begin{align}
		\chi^{\lambda}(\mu) &= 	\sum_{{j_l}=0}^{n_{l}-1} \sum_{{j_{l+1}}=0}^{n_{l+1}} \cdots \sum_{{j_q}=0}^{n_q} {{n_{l}-1}\choose{j_{l}}}
		{n_{l+1}\choose j_{l+1}}\cdots{{n_q}\choose j_q} \nonumber\\
		& \qquad \qquad \qquad \times \big\{ (-1)^{j-\sum_{i\geq l} j_{i} } \delta_{j+p+1-l,\sum_{i\geq l}ij_i}+ (-1)^{j+1-\sum_{i\geq l} j_{i}} \delta_{j+p+1,\sum_{i\geq l}ij_i} \big\} ,
	\end{align}
	where $\delta_{x, y}=1$ if $x=y$, and $0$ otherwise, and
	$\chi^{\lambda}(\mu)=0$
	for any other $\lambda$ that $\chi^{\lambda}([p, n-p])$ may not be zero.
\end{lemma}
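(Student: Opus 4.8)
The plan is to read off $\chi^{\lambda}(\mu)$ directly from the Murnaghan--Nakayama rule for each shape $\lambda$ that survives Lemma~\ref{lem:2}, exploiting throughout that every part of $\mu$ has size at least $p+1$. Because the parts are so large, the border-strip recursion barely branches: at almost every step there is exactly one legal move, and the only genuine freedom sits at the single stage where the ``middle'' of $\lambda$ is cleared away. That freedom is what produces the two $\delta$-terms and the binomial product in the statement.

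First I would handle the hook shapes $\lambda=[1^j,n-j]$, peeling the parts $\mu_1,\dots,\mu_d$ off in that (decreasing) order. If $0\le j\le p-1$ the leg has only $j<p+1$ cells, so nothing can be stripped from the leg; each part but the last therefore comes off the arm with height $0$, and the last part is forced to sweep the whole remaining hook with height $j$, whence $\chi^{\lambda}(\mu)=(-1)^{j}$. If $n-p\le j\le n-1$ the arm has only $n-j\le p<p+1$ cells, so nothing can be stripped from the arm; every part comes off the leg (a size-$s$ part contributing height $s-1$) until the last sweeps the remaining hook, and the heights telescope to $j+1-\ell(\mu)$, so $\chi^{\lambda}(\mu)=(-1)^{\,j+1-\ell(\mu)}$. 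Neither computation branches.

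The heart of the matter is $\lambda=[1^j,2^k,p-k+1,n-j-k-p-1]$. Write $M=p-k+1$ and $N=n-j-k-p-1$, so $\lambda=(N,M,2^k,1^j)$: a long first row, a ``bulk'' of width at most $2$ (row $2$ together with the $k$ rows of length $2$), and a ``tail'' of $j$ unit rows. Since every part of $\mu$ exceeds the width $2$, the legal border strips of size $\ge p+1$ are of three kinds: (i) strips inside the part of row $1$ to the right of column $M$, with height $0$; (ii) strips inside the tail, a size-$s$ strip having height $s-1$; and (iii) a short list of ``mixed'' strips that interact with the bulk, after each of which the remaining diagram is a genuine hook with exactly $j+p+1$ cells. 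A convenient way to package the bookkeeping is to prove first, by unwinding the recursion, that the ribbon tableaux of shape $\lambda$ and content $\mu$ are in bijection with the sub-collections $S$ of $\{\mu_1,\dots,\mu_d\}$ satisfying $\sum_{i\in S}\mu_i=j+p+1$ (the bijection being organized around which parts are ``used on'' the bulk, the complementary parts being peeled off row $1$ and furnishing the single mixed move), and that a telescoping of all the heights makes each such tableau contribute $(-1)^{\,2k+3+j-|S|}=(-1)^{\,j+1-|S|}$, independent of $k$; hence $\chi^{\lambda}(\mu)=\sum_{S}(-1)^{\,j+1-|S|}$ over such $S$. Then, singling out one copy of the smallest part $l$ and counting the $S$ by how many \emph{non-distinguished} parts of each size they use, Pascal's identity turns this into the stated expression: the $S$ containing the distinguished copy of $l$ contribute the $\delta_{j+p+1-l,\,\sum_{i\ge l} i j_i}$ term, those not containing it the $\delta_{j+p+1,\,\sum_{i\ge l} i j_i}$ term, and reserving that copy is why the first factor is ${n_l - 1 \choose j_l}$ rather than ${n_l \choose j_l}$.

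For the remaining shapes $\lambda=[1^j,2^k,p-k-j,n-k-p]$ from the last two bullets of Lemma~\ref{lem:2}, the same method yields $\chi^{\lambda}(\mu)=0$: one checks that a cell obstructing any stripping --- such as the cell in column $1$ of row $2$, whose hook length is at most $p$ here --- can be removed only by a border strip that is either too small (size $<p+1$) or so large that no admissible $\mu$ has a part of that size, so $\lambda$ admits no ribbon tableau of content $\mu$ at all. The main obstacle is step (iii) of the double-hook case: the complete enumeration of the ``mixed'' border strips, the bijection between ribbon tableaux and sub-collections $S$, and the height telescoping, all of which require care at the boundary parameters $k=0$, $k=p-1$ and small $j$ (and when $M=p-k+1$ coincides with a part of $\mu$), where the mixed strips behave differently. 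The hook cases and the vanishing case are routine once the border-strip geometry is set up.
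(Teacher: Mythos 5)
Your proposal is correct and takes essentially the same route as the paper: a direct Murnaghan--Nakayama analysis exploiting $\min(\mu)\ge p+1$, with the double-hook case organized around which parts of $\mu$ tile the column-plus-bulk region of total size $j+p+1$. Your intermediate form $\sum_{S}(-1)^{j+1-|S|}$ over sub-collections with $\sum_{i\in S}\mu_i=j+p+1$, followed by singling out one copy of $l$ and applying Pascal's identity, is algebraically identical to the paper's two-scenario split according to whether the rim hook containing the cell $(1,1)$ (taken of size $l$) exits along the first row or the first column, and your height bookkeeping $(-1)^{2k+3+j-|S|}$ matches the paper's count exactly.
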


\proof Eq.~\eqref{eq:beta-simple} is easy to verify.
We only prove the case $\lambda =[1^j, 2^k, p-k+1, n-j-k-p-1]$.
There are only two scenarios that $\chi^{\lambda}(\mu)$ may not be zero for $\min(\mu) \geq p+1$ when applying the Murnaghan-Nakayama rule:
\begin{itemize}
\item[(i)] The rim hook (or border strip) containing the cell $(1,1)$ is of size $l\geq p+1$ and also contains the cells
$(1,2), \ldots, (1,p-k)$ but not the cell $(1, p-k+1)$. See an illustration in Figure~\ref{fig:char-mu} left;
\item[(ii)] The rim hook containing the cell $(1,1)$ is of size $l\geq p+1$ and also contains the cells
$(2,1), \ldots, (k+1,1)$ but not the cell $(k+2, 1)$. See an illustration in Figure~\ref{fig:char-mu} right.
\end{itemize}
\begin{figure}
	\centering
	
	\includegraphics[width=0.9\linewidth]{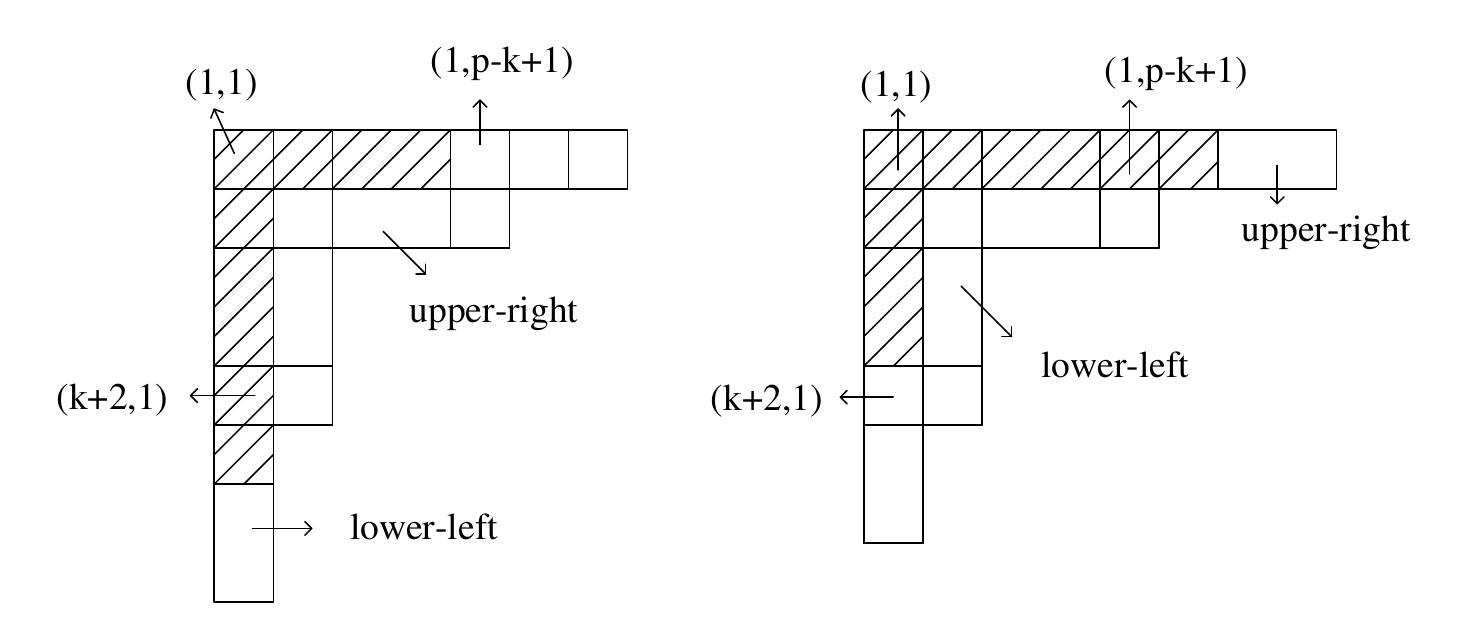}
	\caption{Two scenarios that $\chi^{\lambda}(\mu)$ may not be zero, scenario (i) (left) and scenario (ii) (right),
	where the rim hook with slash is of size $l>p$.}
	\label{fig:char-mu}
\end{figure}%

For (i), the rim hook containing the cell $(1,1)$ necessarily contains the cell $(k+2, 1)$
since the size of the rim hook is at least $p+1$. Then, this rim hook separates the remaining diagram
into two independent pieces: the lower-left piece of size $j+p+1-l$ and the upper-right piece of size 
$n-j-p-1$. Next, we just need to fill each piece with the remaining $n_l-1$ rim hooks of size $l$
and $n_i$ rim hooks of size $i$ for $l < i \leq q$.
Suppose $j_i$ rim hooks of size $i$ go to the lower-left piece
and the rest goes to the upper-right piece.
It is not hard to see that the total height of these rim hooks {\color{blue}(including the one containing $(1,1)$)} is
$$
{j+k+1- \sum_{i \geq l} j_i +k+1}.
$$
Thus, the contribution of such a filling to $\chi^{\lambda}(\mu) $ is given by 
$(-1)^{j- \sum_{i \geq l} j_i} $.
Therefore, the contribution of the first scenario is 
$$
\sum_{{j_l}=0}^{n_{l}-1} \sum_{{j_{l+1}}=0}^{n_{l+1}} \cdots \sum_{{j_q}=0}^{n_q} {{n_{l}-1}\choose{j_{l}}}
{n_{l+1}\choose j_{l+1}}\cdots{{n_q}\choose j_q} 
 \big\{ (-1)^{j-\sum_{i\geq l} j_{i} } \delta_{j+p+1-l,\sum_{i\geq l}ij_i}\big\}.
$$
Analogously, the contribution of the second scenario to $\chi^{\lambda}(\mu)$ is 
$$
\sum_{{j_l}=0}^{n_{l}-1} \sum_{{j_{l+1}}=0}^{n_{l+1}} \cdots \sum_{{j_q}=0}^{n_q} {{n_{l}-1}\choose{j_{l}}}
{n_{l+1}\choose j_{l+1}}\cdots{{n_q}\choose j_q} 
\big\{ (-1)^{j+1-\sum_{i\geq l} j_{i} } \delta_{j+p+1,\sum_{i\geq l}ij_i}\big\},
$$
and the proof follows. \qed

Next, we compute the factor $\frac{\mathfrak{c}_{\lambda,r} }{	 f^{\lambda} }$ in eq.~\eqref{eq:WW}, only for these $\lambda$'s which
return a non-zero value in Lemma~\ref{lem:beta}.
First of all, we have
\begin{align}\label{eq:cf}
\frac{\mathfrak{c}_{\lambda,r} }{	 f^{\lambda} }=\frac{ \sum_{i=0}^r (-1)^i {r \choose i}  \prod_{u\in \lambda} \frac{r-i+c(u)}{h(u)} }{\frac{n!}{ \prod_{u \in \lambda } h(u)}} =  \sum_{i=0}^r  \frac{(-1)^i}{n!}  {r \choose i}  \prod_{u\in \lambda} \{r-i+c(u)\} .
\end{align}
With this, the following two lemmas are obvious and their proofs are omitted.

\begin{lemma} \label{lem:3}
Suppose $\lambda=[1^j,n-j]$ and $r>0$. Then, we have
\[
\frac{\mathfrak{c}_{\lambda, r}}{f^{\lambda }}= \sum_{i=0}^r (-1)^i {r \choose i } {r-i+n-j-1 \choose n}.
\]
\end{lemma}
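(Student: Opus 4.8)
The plan is to evaluate $\mathfrak{c}_{\lambda,r}/f^{\lambda}$ directly from eq.~\eqref{eq:cf}, using that $\lambda=[1^j,n-j]$ is a hook so that its multiset of contents is as simple as possible. First I would list the contents $c(u)$ of the cells $u\in\lambda$: the first row consists of the cells $(1,1),(1,2),\dots,(1,n-j)$ with contents $0,1,\dots,n-j-1$, while the leg below the corner consists of $(2,1),(3,1),\dots,(j+1,1)$ with contents $-1,-2,\dots,-j$. Hence $\{\,c(u):u\in\lambda\,\}$ is precisely the block of $n$ consecutive integers $\{-j,-j+1,\dots,n-j-1\}$.

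Next I would substitute this description of the contents into the inner product appearing in eq.~\eqref{eq:cf}. Fixing $d$ and writing $s=r-d$ for brevity,
\[
\prod_{u\in\lambda}\{\,r-d+c(u)\,\}=(s-j)(s-j+1)\cdots(s+n-j-1),
\]
which is a product of $n$ consecutive integers; reordering the factors, it equals the falling factorial $(s+n-j-1)(s+n-j-2)\cdots(s-j)=n!\binom{s+n-j-1}{n}$. Dividing by $n!$ therefore turns the inner product into $\binom{r-d+n-j-1}{n}$; note this is an honest nonnegative-integer binomial coefficient for every admissible $r,d,j$, and it vanishes exactly when $0$ occurs among the consecutive integers above.

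Finally I would feed this back into eq.~\eqref{eq:cf} with $m=r$, getting
\[
\frac{\mathfrak{c}_{\lambda,r}}{f^{\lambda}}=\sum_{d=0}^{r}\frac{(-1)^{d}}{n!}\binom{r}{d}\prod_{u\in\lambda}\{\,r-d+c(u)\,\}=\sum_{d=0}^{r}(-1)^{d}\binom{r}{d}\binom{r-d+n-j-1}{n},
\]
which is the asserted formula. I do not expect any genuine obstacle: the entire argument is a bookkeeping of the hook's contents together with the observation that the resulting product telescopes into $n$ consecutive integers, so the only thing that requires a moment's care is getting the two ranges of contents (arm versus leg) right.
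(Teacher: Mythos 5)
Your proof is correct and is exactly the computation the paper has in mind: the paper omits the proof as ``obvious'' from eq.~\eqref{eq:cf}, and your argument fills it in by noting that the contents of the hook $[1^j,n-j]$ are the $n$ consecutive integers $-j,\dots,n-j-1$, so the product in eq.~\eqref{eq:cf} becomes $n!\binom{r-d+n-j-1}{n}$. The bookkeeping of the arm and leg contents and the range check on $r-d+n-j-1$ are both accurate, so nothing is missing.
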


\begin{lemma}\label{lem:cf}
	For $\lambda =[1^j, 2^k, p-k+1, n-j-k-p-1]$ where $0\leq k \leq p-1$ and $0 \leq j \leq n-2p-2$,
\begin{align*}
		\frac{\mathfrak{c}_{\lambda, r}}{f^{\lambda}}= \frac{p! (n-p)! }{n!} \sum_{i=0}^r (-1)^i
		{r \choose i} {r-i+p-k-1 \choose p} {r-i+n-j-k-p-2 \choose n-p} .
	\end{align*}
\end{lemma}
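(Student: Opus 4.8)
The plan is to compute the content polynomial of $\lambda$ and substitute it into the already-derived identity~\eqref{eq:cf}, which reads
\[
\frac{\mathfrak{c}_{\lambda,r}}{f^{\lambda}} \;=\; \sum_{d=0}^{r} \frac{(-1)^d}{n!}\binom{r}{d}\prod_{u\in\lambda}\bigl(r-d+c(u)\bigr).
\]
So it suffices to put $F_{\lambda}(x):=\prod_{u\in\lambda}(x+c(u))$ into closed form and then set $x=r-d$. First I would write $\lambda$ in weakly decreasing order as $\lambda=(n-j-k-p-1,\ p-k+1,\ 2^{k},\ 1^{j})$; the hypotheses $k\le p-1$ and $j\le n-2p-2$ are exactly what forces $p-k+1\ge 2$ and $n-j-k-p-1\ge p-k+1$, so $\lambda$ is a legitimate partition of $n$ with $\ell(\lambda)=k+j+2$. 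Since the cells of row $i$ have contents $1-i,2-i,\dots,\lambda_i-i$, row $i$ contributes the factor $(x+1-i)(x+2-i)\cdots(x+\lambda_i-i)$, a product of $\lambda_i$ consecutive integers.

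The main step is a telescoping of these four row-blocks: row $1$ gives $x(x+1)\cdots(x+n-j-k-p-2)=\tfrac{(x+n-j-k-p-2)!}{(x-1)!}$; row $2$ gives $(x-1)x\cdots(x+p-k-1)=\tfrac{(x+p-k-1)!}{(x-2)!}$; the $k$ rows of length $2$ (rows $3,\dots,k+2$) give $\prod_{i=3}^{k+2}(x+1-i)(x+2-i)=\tfrac{(x-1)!}{(x-k-1)!}\cdot\tfrac{(x-2)!}{(x-k-2)!}$; and the $j$ rows of length $1$ (rows $k+3,\dots,k+2+j$) give $\prod_{i=k+3}^{k+2+j}(x+1-i)=\tfrac{(x-k-2)!}{(x-k-2-j)!}$, all read as products of integers (falling factorials). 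Multiplying the four blocks, the interior factorials $(x-1)!$, $(x-2)!$, $(x-k-2)!$ cancel, leaving
\[
F_{\lambda}(x)=\frac{(x+n-j-k-p-2)!\,(x+p-k-1)!}{(x-k-1)!\,(x-k-2-j)!}=p!\,(n-p)!\binom{x+p-k-1}{p}\binom{x+n-j-k-p-2}{n-p},
\]
because the first ratio is a product of exactly $p$ consecutive integers and the second a product of exactly $n-p$ consecutive integers. Substituting $x=r-d$ back into the display above and pulling the constant $p!\,(n-p)!/n!$ out of the sum yields the stated formula.

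I do not expect any genuine difficulty here --- which is why the paper calls the statement immediate --- but the one place that demands care is the bookkeeping in the telescoping: one has to separate $\lambda$ into exactly the right four row-blocks and track the ranges of the falling-factorial ratios without an off-by-one slip, then verify that precisely the factors $(x-1)!$, $(x-2)!$ and $(x-k-2)!$ cancel. A slightly cleaner variant would invoke the general polynomial identity $\prod_{u\in\lambda}(x+c(u))=\prod_{i\ge 1}\tfrac{(x+\lambda_i-i)!}{(x-i)!}$ and specialize it to our $\lambda$; either route delivers the same closed form for $F_{\lambda}(x)$, after which the lemma is immediate from~\eqref{eq:cf}.
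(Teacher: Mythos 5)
Your computation is correct and is exactly the argument the paper has in mind: the paper omits the proof as immediate from eq.~\eqref{eq:cf}, and your evaluation of the content product $\prod_{u\in\lambda}(x+c(u))$ row by row (with the telescoping cancellation yielding $p!\,(n-p)!\binom{x+p-k-1}{p}\binom{x+n-j-k-p-2}{n-p}$, then $x=r-d$) checks out, including the ranges and the factor counts $p$ and $n-p$. No gap to report.
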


In the rest of the paper, some further notation will be used:
as usual, the notation $[y^n] f(y)$ means taking the coefficient of the term $y^n$ in the power series expansion of $f(y)$,
and for any $\mu=(\mu_1,\ldots, \mu_d) \vdash n$,
{
\begin{align*}
	\mathcal{V}_{\mu}(y) :=\prod_{i=1}^d \{ (1+y)^{\mu_i} -1\} .
\end{align*}
}

Now we come to our first main theorem.

\begin{theorem}\label{thm:main-1}
	Suppose $0<p \leq n-p$ and $g \geq 0$. Then, we have
	\begin{align} \label{eq:poly-gen}
		\frac{B^g_{p,n}(\mu)}{|\mathcal{C}_{[p, n-p]}| \cdot |\mathcal{C}_{\mu}|}	& = \sum_{k=0}^{2g} \frac{(-1)^k \stirlingI{n-d-2g+k}{n-d-2g}}{(n-d-2g+k)!} W_{n,n-d-2g+k} ,\\
		{n \choose p}	P_{[p, n-p] , \mu}^{\bullet}(x) &=  [y^{n-p}]  \mathcal{V}_{\mu}(y) \sum_{i=0}^{p-1} {x+i \choose p} (1+y)^{x+i-p},
	\end{align}
	where 
	\begin{align*}
		W_{n, r}=  \frac{p! (n-p)! }{n!} [y^{n-p}]  \mathcal{V}_{\mu}(y) \sum_{a \geq r-p} y^a {p+a \choose a} {p \choose p+a -r +1}.
	\end{align*}

\end{theorem}
\proof
At first, it is easy to show that bicolored maps with face-type (equivalently black vertex degree distribution) $[p, n-p]$
and WDD $\mu$ with $\min(\mu) \geq p+1$ are always connected. Consequently, we have 
$$
P_{[p, n-p], \mu}^{\bullet}(x) = P_{[p, n-p], \mu}(x), \quad B_{p,n}^g(\mu) = \xi_{n,n-d-2g}(\mathcal{C}_{[p,n-p]}, \mathcal{C}_{\mu} ).
$$
We next compute $P_{[p, n-p], \mu}(x)$, and
first assume $p+2 \leq n-p$.

In the light of Lemma~\ref{lem:chi-pnp} and Lemma~\ref{lem:beta}, we can break the quantity $W_{n, r}$ into the following form:
\begin{align*}
	{	W_{n, r}} &=  \sum_{\lambda=[1^j, n-j] \atop 0 \leq j \leq p-1} \frac{\mathfrak{c}_{\lambda,r} }{	 f^{\lambda} }  \chi^{\lambda}([p, n-p]) \chi^{\lambda}(\mu) 
	+ \sum_{\lambda=[1^j, n-j], \atop n-p \leq j \leq n-1} \frac{\mathfrak{c}_{\lambda,r} }{	 f^{\lambda} }  \chi^{\lambda}([p, n-p]) \chi^{\lambda}(\mu) \\
	& \quad +\sum_{\lambda=[1^j, 2^k, p-k+1, n-j-k-p-1], \atop 0\leq k \leq p-1, \, 0 \leq j \leq n-2p-2} \frac{\mathfrak{c}_{\lambda,r} }{	 f^{\lambda} }  \chi^{\lambda}([p, n-p]) \chi^{\lambda}(\mu).
\end{align*}
We compute the third sum first, and observe that the first and the second sums will be
canceled out with some terms from the third sum. With various manipulation techniques,
we eventually 
arrive at
	\begin{align}\label{eq:w-number}
		W_{n, r} = [y^{n-p}] \frac{p! (n-p)! }{n!} \mathcal{V}_{\mu}(y) \sum_{a \geq r-p} y^a {p+a \choose a} {p \choose p+a -r +1}.
	\end{align}
	The details for obtaining eq.~\eqref{eq:w-number} can be found in the appendix.
	According to Theorem~\ref{thm:main-recur11}, we obtain the formula for $B^g_{p,n}(\mu)$.

Finally, we compute the generating function.
\begin{align*}
	& \quad P_{[p, n-p], \mu}(x) = \sum_{m \geq 1} \sum_{k=0}^{n-m} (-1)^k \frac{\stirlingI{m+k}{m}}{(m+k)!} W_{n, m+k} x^m \\
	&= \sum_{m \geq 1} \sum_{k=0}^{n-m}  \frac{ (-1)^k \stirlingI{m+k}{m}}{(m+k)!} \frac{p! (n-p)! }{n!} [y^{n-p}]  \mathcal{V}_{\mu}(y) \sum_{a \geq m+k -p} {p+a \choose a} y^a {p \choose p+a+1-m-k} x^m \\
	&= \frac{p! (n-p)! }{n!} [y^{n-p}]  \mathcal{V}_{\mu}(y) \sum_{k=1}^n {x \choose k}  \sum_{a \geq k -p} {p+a \choose a} y^a {p \choose p+a+1-k}.
\end{align*}
Note that the lowest power of $y$ in $\mathcal{V}_{\mu}(y)$ is $\ell(\mu) \geq 1$
and we are only interested in the coefficient of $y^{n-p}$.
So the summand containing $y^a$ with $a \geq n-p$ does not contribute anything.
Thus, the last formula equals
\begin{align*}
\frac{p! (n-p)! }{n!} [y^{n-p}]  \mathcal{V}_{\mu}(y) \sum_{k=1}^n {x \choose k}  \sum_{ a  = k -p}^{n-p-1} {p+a \choose a} y^a {p \choose p+a+1-k}.
\end{align*}
Next, we observe that 
$$
{p \choose p+a+1-k}=0
$$
if $k>n$ and $k-p \leq a \leq n-p-1$, and the last quantity equals zero if $a\geq n-p$.
Then, it is safe to write the last quantity as
\begin{align*}
& \frac{p! (n-p)! }{n!} [y^{n-p}]  \mathcal{V}_{\mu}(y) \sum_{k \geq 1} {x \choose k}  \sum_{ a  \geq  k -p} {p+a \choose a} y^a {p \choose p+a+1-k} \\
=& \frac{p! (n-p)! }{n!} [y^{n-p}]  \mathcal{V}_{\mu}(y) \sum_{a \geq 0} {p+a \choose a} y^a  \sum_{k=1}^{a+p} {x \choose k}   {p \choose p+a+1-k} .
\end{align*}
It is not hard to obtain
\begin{align*}
	& \quad \sum_{k=1}^{a+p} {x \choose k}   {p \choose p+a+1-k} 
	= {x+p-1 \choose p+a} +{x+p-2 \choose p+a} +\cdots +{x \choose p+a}.
\end{align*}
Next, it is easy to see that
\begin{align*}
\sum_{a \geq 0} {p+a \choose a} y^a {x+ i \choose p+a}=\sum_{a \geq 0}  y^a {x+i \choose p}{x+i-p \choose a} ={x+i \choose p} (1+y)^{x+i-p}.
\end{align*}
Therefore, we have
\begin{align*}
	P_{[p, n-p], \mu}(x)  =\frac{p! (n-p)! }{n!} [y^{n-p}]  \mathcal{V}_{\mu}(y) \sum_{i=0}^{p-1} {x+i \choose p} (1+y)^{x+i-p}.
\end{align*}
Then, the proof for $p+2 \leq n-p$ follows. 

	Note that if $n-p=p$ or $n-p=p+1$, then the only $\mu$ satisfying $\min(\mu) \geq p+1$ is $\mu=[n]$.
For these two exceptional cases, $P_{[p, n-p], [n]}(x)$ can be equivalently viewed as the genus distribution polynomial
for certain one-face bicolored maps. By comparing our formula eq.~\eqref{eq:poly-gen}
with Stanley's formula~\cite{Stanely-two} for the one-face case, we find that they agree with each other.
This completes the proof of the theorem.
\qed

Some examples of these polynomials are given below:
\begin{align*}
	P_{[3,5],[4^2]}^{\bullet} & = \frac{5!\times 4}{8!} (51 x^2 + 31 x^4 +2 x^6), \\
	P_{[3,6],[4,5]}^{\bullet} & = \frac{6!}{9! \times 2} (204 x  +  649 x^3 + 150 x^5 + 5  x^7), \\
	P_{[4,6],[5^2]}^{\bullet} & = \frac{6!\times 5}{10! \times 6} (3220 x^2 + 2553 x^4 + 270 x^6 + 5 x^8), \\
	P_{[4,7],[5,6]}^{\bullet} & = \frac{7!}{11! \times 3} ( 3948 x + 14990 x^3 + 4559 x^5 + 260 x^7 + 3 x^9), \\
	P_{[3,9],[4^3]}^{\bullet} & = \frac{9! \times 2}{12! \times 15} ( 1530 x + 6139 x^3 + 2088 x^5 + 141 x^7 + 2 x^9).
\end{align*}

As mentioned, the Harer-Zagier formula~\cite{harer-zagier} is the genus distribution
polynomial for one-face bicolored maps with WDD being regular, i.e., every
white vertex is of degree exactly two.
From this point of view, the following corollary provides
analogues of the Harer-Zagier formula, i.e., the genus
distribution polynomials for two-face bicolored maps with WDD being regular.

\begin{corollary}
	For $p>0$ and $\mu= [k^d]$ where $k>p$ and $dk \geq 2p$, we have
	\begin{align}
	{dk \choose p}	P_{[p, dk-p], [k^d]}^{\bullet}(x) =   \sum_{i=0}^{p-1}
		\sum_{j=0}^d (-1)^{d-j}  {d \choose j}  {x+i \choose p}  {x+i-p+ jk \choose dk-p} .
	\end{align}
\end{corollary}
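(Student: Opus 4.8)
The plan is simply to specialize the preceding corollary to the regular white vertex degree distribution $\mu=[k^d]$. First I would record that in this case $n=dk$, so $n-p=dk-p$, and the product $\mathcal{V}_{\mu}(y)=\prod_{i=1}^d\{(1+y)^{\mu_i}-1\}$ collapses to the single power $\mathcal{V}_{[k^d]}(y)=\big((1+y)^k-1\big)^d$. Expanding by the binomial theorem,
\[
\big((1+y)^k-1\big)^d=\sum_{j=0}^d(-1)^{d-j}{d \choose j}(1+y)^{jk},
\]
turns the bracketed product in the corollary into a finite sum of pure powers of $(1+y)$.

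Next I would substitute this expansion into the formula $P^{\bullet}_{[p,dk-p],[k^d]}(x)=\frac{p!\,(dk-p)!}{(dk)!}[y^{dk-p}]\,\mathcal{V}_{[k^d]}(y)\sum_{i=0}^{p-1}{x+i \choose p}(1+y)^{x+i-p}$ and interchange the two finite sums with the coefficient functional $[y^{dk-p}]$. This is legitimate because, after expansion, the bracket is a finite $\mathbb{Z}$-linear combination of terms of the form $(1+y)^{e}$ with $e$ an integer plus the indeterminate $x$, and each such term is handled by the formal expansion $(1+y)^{z}=\sum_{a\ge 0}{z \choose a}y^{a}$ with ${z \choose a}=z(z-1)\cdots(z-a+1)/a!$ a polynomial in $z$. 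Concretely, one gets $[y^{dk-p}](1+y)^{jk+x+i-p}={x+i-p+jk \choose dk-p}$, a polynomial in $x$ of degree $dk-p$, for every $0\le j\le d$ and $0\le i\le p-1$.

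Assembling the pieces, the right-hand side of the corollary's formula becomes
\[
\frac{p!\,(dk-p)!}{(dk)!}\sum_{i=0}^{p-1}\sum_{j=0}^{d}(-1)^{d-j}{d \choose j}{x+i \choose p}{x+i-p+jk \choose dk-p},
\]
and since $\frac{p!\,(dk-p)!}{(dk)!}={dk \choose p}^{-1}$, multiplying through by ${dk \choose p}$ yields precisely the claimed identity. The hypothesis $k>p$ is used only to ensure $\min(\mu)=k\ge p+1$, so that the preceding corollary is applicable and all the bicolored maps in question are connected; apart from this check the argument is routine bookkeeping, and the only point demanding a little care is the legitimacy of extracting the $y^{dk-p}$ coefficient when the exponent of $1+y$ involves the formal variable $x$ — which is exactly what the binomial series $(1+y)^{z}=\sum_a{z \choose a}y^a$ takes care of.
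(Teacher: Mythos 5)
Your proposal is correct and follows the same route the paper intends: specialize the preceding corollary to $\mu=[k^d]$, expand $\mathcal{V}_{[k^d]}(y)=((1+y)^k-1)^d$ by the binomial theorem, extract $[y^{dk-p}](1+y)^{x+i-p+jk}=\binom{x+i-p+jk}{dk-p}$ via the binomial series, and clear the factor $\frac{p!\,(dk-p)!}{(dk)!}=\binom{dk}{p}^{-1}$. The only point worth a passing remark is that besides $\min(\mu)=k\ge p+1$ the preceding corollary also assumes $dk-p\ge p$, which is automatic for $d\ge 2$ and handled for $d=1$ by choosing $p$ to be the smaller face length.
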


Let $\mathcal{B}_{p,n}(\mu)$
denote the set of connected bicolored maps of black vertex degree distribution (equivalently, face-type) $[p, n-p]$ and WDD $\mu \vdash n$.
In addition,
denote by $H_n$ the $n$-th harmonic number, i.e., $H_n=1+\frac{1}{2}+\cdots + \frac{1}{n}$.

\begin{proposition}[Average genus] \label{prop:average-genus}
	The expected genus for a randomly chosen bicolored map from $\mathcal{B}_{p,n}(\mu)$ is given by $1/2$ times
	\begin{align*}
		&n-d-H_n- \frac{1}{n+1}\bigg\{ 1+\frac{(-1)^{n-d}}{n}+ \frac{(-1)^p}{{n \choose p-1}} +\frac{(-1)^{n-d-p}}{{n \choose p}} \bigg\}\\  
		& \qquad \qquad + \sum_{S \subset \{\mu_1,\ldots, \mu_d\}} \frac{(-1)^{d-|S|+n-p-\Sigma_S} }{n {n-p-1 \choose  \Sigma_S } {n-1 \choose p}} ,
	\end{align*}
	where $\subset$ means proper subset and $\Sigma_S$ stands for the sum of the elements contained in $S$.
\end{proposition}

\proof
Recall the fact that $\xi_{n,m}(\mathcal{C}_{[p, n-p]}, \mathcal{C}_{\mu})$
equals the number of bicolored maps of black vertex degree distribution $[p, n-p]$ and WDD $\mu$ and $m$
faces.
It is also not hard to see that
$$
|\mathcal{B}_{p,n}(\mu)|= |\mathcal{C}_{[p, n-p]}| \cdot |\mathcal{C}_{\mu}| =\sum_{m>0} \xi_{n,m}(\mathcal{C}_{[p, n-p]}, \mathcal{C}_{\mu}).
$$
We next consider the expected number of faces $\mathbb{E}(m)$ which is obviously given by
$$
\mathbb{E}(m)=\sum_m m \frac{\xi_{n,m}(\mathcal{C}_{[p, n-p]}, \mathcal{C}_{\mu})}{|\mathcal{C}_{[p, n-p]}| \cdot |\mathcal{C}_{\mu}|}=\frac{\textrm{d}}{\textrm{d} x}P_{[p, n-p],\mu}^{\bullet}(x) \bigg|_{x=1} .
$$
We write 
\begin{align*}
P_{[p, n-p],\mu}^{\bullet}(x)= \frac{1}{n!} \sum_{i=0}^{p-1} (x+i)_p \sum_{S \subseteq \{\mu_1,\ldots, \mu_d\}} (-1)^{d-|S|}(x+i-p+ \Sigma_S)_{n-p} .
\end{align*}
The righthand side of the above formula is the sum of $2^d p$ polynomials
which can be classified into three classes: those with the highest power of the factor $x-1$ being respectively
$0, 1,2$.
Obviously, the evaluation at $x=1$ of those with a factor $(x-1)^2$ will contribute zero to $\mathbb{E}(m)$.
There is only one polynomial of which $x-1$ is not a factor, i.e., for $i=p-1$ and $S=\{\mu_1, \ldots, \mu_d\}$.
The contribution of this case is
$$
\frac{1}{n!} p! (n)_{n-p} \sum_{k=1}^n \frac{1}{k}= H_n.
$$
The ones with the highest power of the factor $x-1$ being $1$ are collected in the following:
\begin{align*}
	& \frac{1}{n!} \sum_{i=0}^{p-2} (x+i)_p \big\{(x+i-p+n)_{n-p}+(-1)^d (x+i-p)_{n-p} \big \} \\
	& \qquad + \frac{1}{n!} (x+p-1)_p \sum_{S \subset \{\mu_1,\ldots, \mu_d\}} (-1)^{d-|S|}(x-1+ \Sigma_S)_{n-p}.
\end{align*}
The contribution of the last case is
\begin{align*}
	&	\frac{1}{n!} \sum_{i=0}^{p-2}  \{(1+i-p+n)!  (-1)^{p-i-2} (p-i-2)!+(-1)^d (i+1)! (-1)^{n-i-2}  (n-i-2)!  \} \\
	& \qquad + \frac{p!}{n!} \sum_{S \subset \{\mu_1,\ldots, \mu_d\}} (-1)^{d-|S|}( \Sigma_S)! (-1)^{n-p-1-\Sigma_S} (n-p-1-\Sigma_S)! \, .
\end{align*}
Finally, applying the following well-known identity involving the reciprocal of binomial coefficients
\begin{align}
\sum_{k=0}^p (-1)^k {n \choose k}^{-1}=\frac{n+1}{n+2} \bigg(1+ (-1)^p {n+1 \choose p+1}^{-1}\bigg),
\end{align}
we simplify the last quantity into
$$
\frac{1}{n+1}\bigg\{ 1+\frac{(-1)^{n-d}}{n}+ \frac{(-1)^p}{{n \choose p-1}} +\frac{(-1)^{n-d-p}}{{n \choose p}} \bigg\} 
- \sum_{S \subset \{\mu_1,\ldots, \mu_d\}} \frac{(-1)^{d-|S|+n-p-\Sigma_S} }{n {n-p-1 \choose  \Sigma_S } {n-1 \choose p}} .
$$
Due to the Euler characteristic formula, we have
\begin{align*}
	2 \mathbb{E}(g)= n-d -\mathbb{E}(m),
\end{align*}
and the proof follows from summarizing the above computation for $\mathbb{E}(m)$. \qed

We remark that combining our computational technicality and the generating function expression for $\xi_{n,m}$ in~\cite{louf-phd}, it seems possible
to obtain the genus distribution polynomial $P_{[p, n-p], \mu}(x)$.

\section{Zeros and log-concavity}\label{sec4}

In this section, we discuss the zeros and properties of the coefficients of the connected genus distribution polynomials.
{
For $n-p \geq p>0$ and $\mu=(\mu_1, \mu_2, \ldots, \mu_d) \vdash n$, let 
\begin{align*}
	H_{\mu}(x) = [y^{n-p}] (1+y)^{x-1} \mathcal{V}_{\mu} (y), \quad G_{\mu}(x) = {(x+p-1)_p} H_{\mu}(x).
\end{align*}
The following result of Stanley~\cite{Stanely-two} will be useful to us to analyze the zeros of $H_{\mu}(x)$.

\begin{theorem}[Stanley~\cite{Stanely-two}]\label{thm:stan-zero}
	Let $\textbf{E}$ be the operator $\textbf{E}: f(x) \rightarrow f(x-1)$.
	Suppose $g(t)$ is a complex polynomial with all roots on the unit circle,
	and the multiplicity of $1$ as a root is $m\geq 0$.
	Let $Q(x)= g(\textbf{E}) (x+n-1)_n$.
	If $g(t)$ has degree exactly $d \geq n-1$, then $Q(x)$ is a polynomial of degree $n-m$ for which
	every zero has real part $\frac{d-n+1}{2}$.
\end{theorem}

As an application, we obtain the following proposition.
\begin{proposition}
Every zero of $H_{\mu}(x)$ has real part $-\frac{p-1}{2}$.
\end{proposition}
\proof
We first have
\begin{align*}
	H_{\mu}(x)&=[y^{n-p}] \,  (1+y)^{x-1}  \prod_{i=1}^d \Big\{(1+y)^{\mu_{i}}-1\Big\}  \\
	&=\sum_{1\leq j_1<\cdots < j_i \leq d \atop i \geq 0} {x-1+\mu_{j_1}+\cdots + \mu_{j_i} \choose n-p} (-1)^{d-i}   \\
	&=\sum_{1\leq j_1<\cdots < j_i \leq d \atop i \geq 0} {x-1+n-\mu_{j_1}-\cdots - \mu_{j_i} \choose n-p} (-1)^{i}.
\end{align*}
Denote by $\widetilde{H}(y)$ the polynomial resulted from replacing $x+p$ with $y$ in $H_{\mu}(x)$.
Then, in terms of the backward shift operator $\textbf{E}$, it is not difficult to see
$$
\widetilde{H}(y) = \prod_{i=1}^d (-\textbf{E}^{\mu_i} +1) {y+n-p-1 \choose n-p}.
$$
According to Theorem~\ref{thm:stan-zero}, we conclude that every zero of $\widetilde{H}(y)$
has real part $\frac{n-(n-p-1)}{2}=\frac{p+1}{2}$.
Consequently, all zeros of $H(x)$ have real part $\frac{p+1}{2}-p=-\frac{p-1}{2}$,
completing the proof.
\qed

Next, the following lemma is crucial.

\begin{lemma}\label{lem:zero-base}
	Suppose $z$ is a complex number with $|z|=1$. Then, the real roots of the polynomial $z G_{\mu}(x) + G_{\mu} (x-1)$ include $0, -1, \ldots, 2-p$ and {\color{blue}possibly $\frac{2-p}{2}$}, and every root of $ z G_{\mu}(x) + G_{\mu} (x-1)$ with non-zero imaginary part has real part $-\frac{p-2}{2}$.
\end{lemma}

\proof Since every zero of $H_{\mu}(x)$ has real part $-\frac{p-1}{2}$,
we may assume
$$
G_{\mu}(x)= (x+p-1)(x+p-2)\cdots x \prod_j (x+\frac{p-1}{2}-i b_j),
$$
where $i^2=-1$. 
Suppose $x=-\frac{p-2}{2} + a+ i b$ with $b \neq 0$ is a root of $ z G_{\mu}(x) + G_{\mu} (x-1)$.
Then, $z G_{\mu}(x) + G_{\mu} (x-1)=0$ implies $|G_{\mu}(x) | = |G_{\mu} (x-1)|$.
That is, the following modulus relation holds:
\begin{align*}
& |p-1-\frac{p-2}{2} + a+ i b|\cdot |p-2-\frac{p-2}{2} + a+ i b| \cdots |-\frac{p-2}{2} + a+ i b| \cdot \prod_j |a+\frac{1}{2} +i (b-b_j)|\\
=&|p-2-\frac{p-2}{2} + a+ i b|\cdot |p-3-\frac{p-2}{2} + a+ i b| \cdots |-\frac{p}{2} + a+ i b| \cdot \prod_j |a-\frac{1}{2} +i (b-b_j)|.
\end{align*}
Since $b \neq 0$, the above equality is equivalent to 
\begin{align*}
	& |p-1-\frac{p-2}{2} + a+ i b| \cdot \prod_j |a+\frac{1}{2} +i (b-b_j)|\\
	=&|-\frac{p}{2} + a+ i b| \cdot \prod_j |a-\frac{1}{2} +i (b-b_j)|.
\end{align*}
If $a>0$,
we obviously have $|\frac{p}{2}+a+ib|> |- \frac{p}{2}+a+ib|$ and 
$$
|a+ \frac{1}{2} +i (b-b_j)| > |a - \frac{1}{2}  +i (b-b_j)|
$$
for all $j$. That is, the lefthand side of the last equation is strictly larger than the righthand side.
If $a<0$, we have strict inequality in the opposite direction. Therefore, $a=0$ is necessary,
and thus every root with non-zero imaginary part has real part $-\frac{p-2}{2}$.

It is easy to verify that $0, -1, \ldots, 2-p$ {\color{blue} (being of an empty sequence if $p=1$)} are the roots of $z G_{\mu}(x) + G_{\mu} (x-1)$.
Next, we will show that no other real roots exist {\color{blue}except $\frac{2-p}{2}$}.
Suppose $c \notin \{0, -1, \ldots, 2-p, \frac{2-p}{2}\}$ is a real root.
Then, the following modulus relation holds:
\begin{align*}
	& |p-1+c|\cdot |p-2+c| \cdots |c| \cdot \prod_j |c+\frac{p-1}{2}-i b_j|\\
	=& |p-1+c-1|\cdot |p-2+c-1| \cdots |c-1| \cdot \prod_j |c-1+\frac{p-1}{2}-i b_j|
\end{align*}
which after cancellation becomes
\begin{align*}
	& |p-1+c|\cdot \prod_j |c+\frac{p-1}{2}-i b_j|
	=  |c-1| \cdot \prod_j |c-1+\frac{p-1}{2}-i b_j|.
\end{align*}
Next, we distinguish the following cases:
\begin{itemize}
	\item If $c>1$, then the lefthand side is clearly larger than the righthand side;
	\item If $c=1$, then the lefthand side is larger than zero but the righthand side is zero;
	\item If {\color{blue}$\frac{2-p}{2} < c <1$}, then we can show the lefthand side is larger than the righthand side as every left term is larger than the corresponding right term;
	\item {\color{blue}If $c < \frac{2-p}{2}$, then the lefthand side is smaller than the righthand side as we can show that every left term is smaller than the corresponding right term.}
\end{itemize}
Therefore, no such $c$ exists and the proof is complete. \qed

Lemma~\ref{lem:zero-base} is a kind of generalization of Postnikov and Stanley~\cite[Lemma 9.13]{pos-stan}
where the underlying polynomial is not allowed to have real roots.

\begin{theorem}\label{thm:zero-root}
	Suppose $0 < p \leq n-p$. Then, for $\mu=[p, n-p]$ and any $\mu \vdash n$ with $\min(\mu) \geq p+1$,
	every zero of the genus distribution polynomial $P_{[p, n-p],\mu}^{\bullet}(x)$ has a real part $0$.
\end{theorem}
\proof  We will show in the next section that Theorem~\ref{thm:main-1} holds for $\mu=[p, n-p]$ as well.
As such,
the genus distribution polynomial for the consider cases here can be written as
\begin{align*}
P_{[p, n-p],\mu}^{\bullet}(x) =&(\textbf{E}^{p-1} + \textbf{E}^{p-2} +\cdots +\textbf{E}^0) \frac{(n-p)!}{n!} G_{\mu}(x)\\
=& \frac{(n-p)!}{n!} \prod_{k=1}^{p-1} (\textbf{E} +z_k)  G_{\mu}(x),
\end{align*}
for some $z_k$ with $|z_k|=1$.
Iterating the argument in Lemma~\ref{lem:zero-base}, after each step, we reduce the number of real roots by one
and increase the real part by $1/2$.
Hence, eventually the polynomial has a real root $0$
and every root with possible non-zero imaginary part has real part $-\frac{p-1}{2} + (p-1) \frac{1}{2} =0$.
This completes the proof of the theorem. \qed

}

A sequence of real numbers $s_0, s_1, s_2, \ldots$ is said to have the log-concavity property if $s_i^2 \geq s_{i-1} s_{i+1}$
for all $i>0$, and is said to be unimodal if there exists $m\geq 0$ such that $s_0 \leq \cdots \leq s_m\geq s_{m+1} \geq \cdots$.
It is well known that if a polynomial has only real zeros then the sequence of its coefficients is log-concave,
and log-concavity implies unimodality.
By noticing that $P^{\bullet}_{[p,n-p], \mu}(x)$
either has only terms of odd degrees or only even degrees via a parity argument of permutations,
the following corollary immediately follows from Theorem~\ref{thm:zero-root}.

\begin{corollary}[Log-concavity and unimodality]\label{cor:log-concavity}
	Suppose $0<p \leq n-p$.
	If $\mu=(\mu_1, \mu_2, \ldots, \mu_d) \vdash n$ where $\mu_i \geq p+1$, or $\mu=[p, n-p]$,
	then the sequence 
	$$
	B^0_{p,n}(\mu), B^1_{p,n}(\mu),B^2_{p,n}(\mu), \ldots 
	$$
	 is log-concave and unimodal.
\end{corollary}

Corollary~\ref{cor:log-concavity} is closely related to the conjecture of Gross, Tucker and Robinson~\cite{GRT} which essentially says the numbers of maps of different genera (in increasing order) determined by any fixed connected graph give a log-concave sequence.
The three-decade conjecture~\cite{GRT} has been confirmed for a number of special graph families, but is still open in general (except for a disproof under evaluation recently announced by Mohar~\cite{mohar24}).
In the conjecture, the underlying graph of the maps is fixed, while in our setting, the black vertex degree
distribution and WDD of the underlying bicolored graphs of the bicolored maps under consideration are fixed.

After showing that $P_{[n], \mu}(x)$ has only imaginary zeros for any $\mu \vdash n$, Stanley~\cite{Stanely-two} studied if it holds for $P_{\eta, \mu}(x)$ in general but came up with a counterexample: the following polynomial 
$$
P_{(332), (332)}(x)= \frac{1}{1120} (660 x^2 +424 x^4 +35 x^6 +x^8)
$$
has roots with non-zero real parts.
Stanley wants to know (private communication) if his counterexample
is the ``smallest'' one that has roots with non-zero real parts.
We notice that there are many counterexamples, e.g., $P_{(44),(44)}(x)$ and $P_{(45),(45)}(x)$.
However, their connected counterparts always have only imaginary zeros.
For example, 
$$
P^{\bullet}_{(332),(332)}(x) = \frac{1}{1120} (618 x^2+387 x^4 +3 x^6)
$$
has only imaginary zeros.

In view of Theorem~\ref{thm:zero-root}, Corollary~\ref{cor:log-concavity} and lots of data, we propose the following conjectures.

\begin{conjecture}
	For any $\eta, \mu \vdash n$, the polynomial $P^{\bullet}_{\eta, \mu}(x)$ has only
	imaginary zeros.
\end{conjecture}

\begin{conjecture}
For any $\eta, \mu \vdash n$, the sequence of the non-zero coefficients of the polynomial $P^{\bullet}_{\eta, \mu}(x)$ is 
unimodal and log-concave.
\end{conjecture}

\section*{Acknowledgements}
The authors thank the anonymous referees for comments which improved the presentation of the work.

\vskip 10pt

\noindent {\bf Declarations of interest}: none.

\noindent {\bf Funding}: none.

\appendix
\section{Computation of the $W$-numbers}
In this section, we present the details for obtaining
eq.~\eqref{eq:w-number}.
In the light of Lemma~\ref{lem:chi-pnp} and Lemma~\ref{lem:beta}, we can break the quantity $W_{n, r}$ into the following form:
\begin{align*}
	{	W_{n, r}} &=  \sum_{\lambda=[1^j, n-j] \atop 0 \leq j \leq p-1} \frac{\mathfrak{c}_{\lambda,r} }{	 f^{\lambda} }  \chi^{\lambda}([p, n-p]) \chi^{\lambda}(\mu) 
	+ \sum_{\lambda=[1^j, n-j], \atop n-p \leq j \leq n-1} \frac{\mathfrak{c}_{\lambda,r} }{	 f^{\lambda} }  \chi^{\lambda}([p, n-p]) \chi^{\lambda}(\mu) \\
	& \quad +\sum_{\lambda=[1^j, 2^k, p-k+1, n-j-k-p-1], \atop 0\leq k \leq p-1, \, 0 \leq j \leq n-2p-2} \frac{\mathfrak{c}_{\lambda,r} }{	 f^{\lambda} }  \chi^{\lambda}([p, n-p]) \chi^{\lambda}(\mu).
\end{align*}
We compute the third sum first.
\begin{equation}
	\begin{split}
		&  \quad  A_3
		:	=\sum_{\lambda=[1^j, 2^k, p-k+1,n-j-k-p-1] \atop j,k}\frac{\mathfrak{c}_{\lambda, r}}{f^{\lambda}} \chi^{\lambda}([p, n-p] ) \chi^{\lambda}(\mu )\\
		&=\sum_{ j,k} \Bigg\{ \sum_{h} (-1)^h {r \choose h} {r-h+n-j-k-p-2 \choose n-p} {r-h+p-k-1 \choose p} \frac{(n-p)! p!}{n!} \Bigg \}   \\
		&\qquad \times (-1)^{j+1} \Bigg\{ \sum_{j_l}^{n_{l}-1}\sum_{j_{l+1}}^{n_{l+1}}\cdots\sum_{j_q}^{n_q}{{n_{l}-1}\choose{j_{l}}}{n_{l+1}\choose j_{l+1}}\cdots{{n_q}\choose j_q}\\
		&\qquad  \times \Big\{ (-1)^{j+ \sum_{i \geq l} j_i} \delta_{j+p+1-l,\sum_{i\geq l}i j_i}+  (-1)^{j+1+ \sum_{i \geq l} j_i} \delta_{j+p+1,\sum_{i\geq l} i j_i} \Big\} \Bigg \}.
	\end{split}\nonumber
\end{equation}	
The quantity $A_3$ consists of two parts, and the first part
\begin{equation}
	\begin{split}
		B_1: & =\sum_{ j,k} \Bigg\{ \sum_{h=0}^r (-1)^h {r \choose h} {r-h+n-j-k-p-2 \choose n-p} {r-h+p-k-1 \choose p}  \Bigg \} (-1)^{j+1}  \\
		&\quad \times \Bigg\{ \sum_{j_l}^{n_{l}-1}\sum_{j_{l+1}}^{n_{l+1}}\cdots\sum_{j_q}^{n_q} {{n_{l}-1}\choose{j_{l}}}{n_{l+1}\choose j_{l+1}}\cdots{{n_q}\choose j_q}
		(-1)^{j+ \sum_{i \geq l} j_i } \delta_{j+p+1-l,\sum_{i\geq l}i j_i} \Bigg \} \\
		&= \sum_k  \sum_{h} (-1)^h {r \choose h}  {r-h+p-k-1 \choose p}   
		\Bigg\{  \sum_{j_l}^{n_{l}-1}\sum_{j_{l+1}}^{n_{l+1}}\cdots\sum_{j_q}^{n_q} {{n_{l}-1}\choose{j_{l}}}{n_{l+1}\choose j_{l+1}}\cdots{{n_q}\choose j_q} \\
		& \quad \times (-1)^{1+ \sum_{i \geq l} j_i} {r-h+n-(\sum_{i\geq l}i j_i +l -p-1)-k-p-2 \choose n-p} \Bigg \} - U_1,
	\end{split}\nonumber
\end{equation}	
where 
\begin{align*}
	U_1= \sum_{k=0}^{p-1}  \sum_{h} (-1)^h {r \choose h}  {r-h+p-k-1 \choose p}   
	(-1)^{\ell(\mu)} {r-h-k-1 \choose n-p} .
\end{align*}
In the above computation of $B_1$, we are supposed to sum over all $0\leq j \leq n-2p-2$ and $j_i$'s such
that $j=\sum_{i\geq l}i j_i +l -p-1$ (i.e., the Kronecker delta factor).
However, with a careful analysis, we find that the only case that needs to be
excluded is the case $j_l=n_l-1$ and $j_i=n_i$ for $i>l$ whence the subtraction of $U_1$.

Next, we have
\begin{align*}
	& \quad  B_1+U_1= [y^{n-p}] \sum_k  \sum_{h} (-1)^h {r \choose h}  {r-h+p-k-1 \choose p}   
	\\
	& \quad \times  \sum_{j_l}^{n_{l}-1}\sum_{j_{l+1}}^{n_{l+1}}\cdots\sum_{j_q}^{n_q} {{n_{l}-1}\choose{j_{l}}}{n_{l+1}\choose j_{l+1}}\cdots{{n_q}\choose j_q} (-1)^{1+ \sum_{i \geq l} j_i} (1+y)^{r-h+n-(\sum_{i\geq l}i j_i +l )-k-1}  \\
	& = [y^{n-p}]  \sum_{k=0}^{p-1}  \sum_{h} (1+y)^{r-h+n-l-k-1} (-1)^h {r \choose h}  {r-h+p-k-1 \choose p}   \\
	&	\qquad \times \Big\{ - \{ 1- (1+y)^{-l}\}^{n_l -1} \prod_{i> l} \{ 1- (1+y)^{-i}\}^{n_i} \Big\}.
\end{align*}

Analogously, we compute the other part $B_2$ of $A_3$.
\begin{equation}
	\begin{split}
		B_2: & =\sum_{ j,k} \Bigg\{ \sum_{h} (-1)^h {r \choose h} {r-h+n-j-k-p-2 \choose n-p} {r-h+p-k-1 \choose p}  \Bigg \} (-1)^{j+1}  \\
		&\quad \times \Bigg\{ \sum_{j_l}^{n_{l}-1}\sum_{j_{l+1}}^{n_{l+1}}\cdots\sum_{j_q}^{n_q} {{n_{l}-1}\choose{j_{l}}}{n_{l+1}\choose j_{l+1}}\cdots{{n_q}\choose j_q}
		(-1)^{j+1+ \sum_{i \geq l} j_i } \delta_{j+p+1,\sum_{i\geq l}i j_i} \Bigg \} \\
		&= \sum_k  \sum_{h} (-1)^h {r \choose h}  {r-h+p-k-1 \choose p}   
		\Bigg\{  \sum_{j_l}^{n_{l}-1}\sum_{j_{l+1}}^{n_{l+1}}\cdots\sum_{j_q}^{n_q} {{n_{l}-1}\choose{j_{l}}}{n_{l+1}\choose j_{l+1}}\cdots{{n_q}\choose j_q} \\
		& \quad \times (-1)^{\sum_{i \geq l} j_i} {r-h+n-(\sum_{i\geq l}i j_i  -p-1)-k-p-2 \choose n-p} \Bigg \} - U_2,
	\end{split}\nonumber
\end{equation}	
where 
\begin{align*}
	U_2= \sum_k  \sum_{h} (-1)^h {r \choose h}  {r-h+p-k-1 \choose p}   
	{r-h+n-k-1 \choose n-p} 
\end{align*}
is dealing with the case $j_i=0$ for all $i$.
Next, we have
\begin{align*}
	& B_2+U_2= 
	[y^{n-p}]  \sum_k  \sum_{h} (1+y)^{r-h+n-k-1} (-1)^h {r \choose h}  {r-h+p-k-1 \choose p}   \\
	&	\qquad \times \{ 1- (1+y)^{-l}\}^{n_l -1} \prod_{i> l} \{ 1- (1+y)^{-i}\}^{n_i} .
\end{align*}

{
	
	With a careful inspection, we realize that $\frac{p! (n-p)! }{n!} U_1$ equals the second sum while $\frac{p! (n-p)! }{n!} U_2$
	equals the first sum for $W_{n,r}$. Therefore,
	we now have
	\begin{align*}
		W_{n, r}= &\frac{p! (n-p)! }{n!} (	B_1+U_1+B_2+U_2)\\
		= & [y^{n-p}] \frac{p! (n-p)! }{n!} \mathcal{V}_{\mu}(y) \sum_{k=0}^{p-1}  \sum_{h\geq 0} (1+y)^{r-h-k-1} (-1)^h {r \choose h}  {r-h+p-k-1 \choose p}  \\
		= & [y^{n-p}] \frac{p! (n-p)! }{n!} \mathcal{V}_{\mu}(y) \sum_{k=0}^{p-1} \sum_{a \geq 0} y^a {p+a \choose a} \sum_{h \geq 0} (-1)^h {r \choose h} {r-h-k-1+p \choose p+a} .
	\end{align*}
	Since for $a\geq 0$ and $0 \leq k \leq p-1$,
	\begin{align*}
		\sum_{h \geq 0} (-1)^h {r \choose h} {r-h-k-1+p \choose p+a} & = \sum_{h \geq 0} [t^h](1-t)^r  \cdot [t^{r-h}] \frac{t^{k+1+a}}{(1-t)^{p+1+a}} \\
		&= {p-k-1 \choose r-k-1-a},
	\end{align*}
	we next arrive at eq.~\eqref{eq:w-number}:
	\begin{align*}
		W_{n, r} &= [y^{n-p}]  \frac{p! (n-p)! }{n!} \mathcal{V}_{\mu}(y) \sum_{k=0}^{p-1} \sum_{a \geq 0} y^a {p+a \choose a} {p-k-1 \choose r-k-1-a} \\
		& = [y^{n-p}] \frac{p! (n-p)! }{n!} \mathcal{V}_{\mu}(y) \sum_{a \geq r-p} y^a {p+a \choose a} {p \choose p+a -r +1}.
	\end{align*}
	The last simplification follows from the facts that ${p-k-1 \choose p+a-r} =0$ if $p+a-r <0$ and that
	for $ q_1 \geq 0$ and $ q_2 \geq 0$
	$$
	\sum_{i=0}^{q_2} {i \choose q_1} = {q_2 +1 \choose q_1 +1}.
	$$

\end{document}